\documentclass[11pt]{amsart}
\usepackage[T1]{fontenc}

\usepackage{amssymb, amsthm}
\usepackage[all]{xy}
\usepackage{pb-diagram, pb-xy}

\usepackage{hyperref}
\hypersetup{colorlinks=true} 

\newtheorem{thm}{Theorem}[section]
\newtheorem*{mainthm}{Main Theorem}

\newtheorem{cor}[thm]{Corollary}

\newtheorem{lem}[thm]{Lemma}
\newtheorem{prop}[thm]{Proposition}

\theoremstyle{definition}
\newtheorem{defn}[thm]{Definition}

\newtheorem{ex}[thm]{Example}
\newtheorem{para}[thm]{--}

\numberwithin{equation}{thm}

\newcommand{\IA}{\mathbb A}
\newcommand{\IC}{\mathbb C}
\newcommand{\IG}{\mathbb G}
\newcommand{\IP}{\mathbb P}
\newcommand{\IQ}{\mathbb Q}
\newcommand{\IR}{\mathbb R}
\newcommand{\IZ}{\mathbb Z}

\newcommand{\bA}{\mathbf A}
\newcommand{\bM}{\mathbf M}

\newcommand{\bAff}{\mathbf{Aff}}
\newcommand{\bHo}{\mathbf{Ho}}
\newcommand{\bSet}{\mathbf{Set}}
\newcommand{\bVec}{\mathbf{Vec}}

\DeclareMathOperator{\End}{End}
\DeclareMathOperator{\Hom}{Hom}
\DeclareMathOperator{\spec}{Spec}

\newcommand{\bad}{\mathrm{b}}
\newcommand{\eff}{\mathrm{eff}}
\newcommand{\gud}{\mathrm{g}}
\newcommand{\id}{\mathrm{id}}
\newcommand{\Q}{\mathrm{Q}}
\newcommand{\Qaff}{\mathrm{Q}_{\mathrm{aff}}}
\newcommand{\Qeff}{\mathrm{Q}_{\mathrm{eff}}}
\newcommand{\Qh}{\mathrm{Q_h}}
\renewcommand{\top}{\mathrm{top}}
\newcommand{\eps}{\varepsilon}

%Mise En Page
\setlength{\textwidth}{15.1cm}%
\setlength{\textheight}{22cm}%
\setlength{\hoffset}{-1.1cm}%
\setlength{\voffset}{-1cm}

\renewcommand{\leq}{\leqslant}
\renewcommand{\geq}{\geqslant}

\title{Algebraic cogroups and Nori motives}
%\date {\today}

\author{Javier Fres\'an} 
\address{Javier Fres\'an, CMLS, \'Ecole Polytechnique, F-91128 Palaiseau, France}
\email{javier.fresan@polytechnique.edu}

\author{Peter Jossen}%
\address{Peter Jossen, ETH Z\"urich, D-MATH, R\"amistrasse 101, CH-8092 Z\"urich, Switzerland}
\email{peter.jossen@math.ethz.ch}

\begin{document}

\begin{abstract}
We introduce the notion of algebraic cogroup over a subfield $k$ of the complex numbers and use it to prove that every Nori motive over $k$ is isomorphic to a quotient of a motive of the form $H^n(X, Y)(i)$.
\end{abstract}

\maketitle

\tableofcontents

\section{Introduction and overview}

\begin{par}
Let us fix a subfield $k$ of the complex numbers and denote by $\bM(k)$ the $\IQ$-linear neutral tannakian category of mixed motives over $k$ introduced by Nori. We call objects of $\bM(k)$ simply  \emph{motives}. A rather immediate consequence of the construction of Nori's category is that every motive is isomorphic to a subquotient of a motive 
\begin{equation}\label{Eqn:SumOfElementaryMotivesIntro}
\bigoplus_\alpha H^{n_\alpha}(X_\alpha,Y_\alpha)(i_\alpha)
\end{equation}
associated with a finite collection $(X_\alpha)_\alpha$ of varieties over $k$, closed subvarieties $Y_\alpha\subseteq X_\alpha$, and integers $n_\alpha$ and $i_\alpha$. The question at the outset of this paper is whether every motive may be written as a \emph{submotive} or as a \emph{quotient} of a motive of the shape \eqref{Eqn:SumOfElementaryMotivesIntro}. We can now give a positive answer to it, see Theorems \ref{Thm:SubquotientProblemMainQuotients} and \ref{Thm:SubquotientProblemMainSubs}.
\end{par}

\begin{par}
\begin{mainthm} Let $M$ be a motive over $k$. There exists a pair of varieties $(X_0,Y_0)$ defined over $k$, integers $n_0$ and $i_0$, and an injective morphism of motives
$$M \xrightarrow{\quad} H^{n_0}(X_0,Y_0)(i_0).$$
Dually, there exists a pair of varieties $(X_1, Y_1)$ over $k$, integers $n_1$ and $i_1$, and a surjective morphism of motives
$$H^{n_1}(X_1,Y_1)(i_1) \xrightarrow{\quad} M.$$
\end{mainthm}
\end{par}

\begin{par} As we will explain in \ref{Par:Sum101IsElementary}, after introducing motives, a sum of the form \eqref{Eqn:SumOfElementaryMotivesIntro} is actually isomorphic to a single motive $H^{n_0}(X_0,Y_0)(i_0)$, so there is no difference between the question at the outset of the paper and the seemingly stronger answer of the main theorem.
\end{par}

\vspace{4mm}
\begin{para}\label{Par:ApplicationAbstractSetting}
A reason why the reader may be interested in a statement like this is the following: Nori's formalism is very efficient in producing and comparing exact functors on $\bM(k)$. For example, suppose we are given two exact functors
$$F\colon \bM(k) \to \bA \qquad\qquad  G\colon \bM(k) \to \bA$$
with values in a $\IQ$-linear abelian category $\bA$ and a natural injection $\varepsilon_M \colon F(M)\to G(M)$ for each motive $M$. If $\varepsilon_M$ is an isomorphism for all objects of the form $H^n(X,Y)(i)$, then $\varepsilon$ is an isomorphism of functors. Indeed, it suffices to present an arbitrary motive as a subquotient of \eqref{Eqn:SumOfElementaryMotivesIntro}, then write down the obvious short exact sequences. If however $F$ and $G$ are only left exact functors, this argument breaks down, and there seems to be no formal remedy. To the rescue comes our main theorem, according to which we can inject an arbitrary motive $M$ into a motive $M_0$ of the form $H^n(X,Y)(i)$. The obvious short exact sequence to write down is
$$\begin{diagram}
\node{0}\arrow{e}\node{F(M)}\arrow{s,l,J}{\varepsilon_M}\arrow{e}\node{F(M_0)}\arrow{s,l,J,A}{\varepsilon_{M_0}}\arrow{e}\node{F(M_0/M)}\arrow{s,l,J}{\varepsilon_{M_0/M}}\\
\node{0}\arrow{e}\node{G(M)}\arrow{e}\node{G(M_0)}\arrow{e}\node{G(M_0/M)}
\end{diagram}$$
where the vertical map $\varepsilon_{M_0}$ is an isomorphism by assumption. A straightforward diagram chase reveals that $\varepsilon_M$ is an isomorphism as well, and since $M$ was arbitrary, $\varepsilon$ is an isomorphism of functors.
\end{para}

\vspace{4mm}
\begin{para}
\begin{par}
Here are concrete examples of the situation described in \ref{Par:ApplicationAbstractSetting}. First and foremost, the functor associating with a motive $M$ the $\IQ$-vector space $$F(M) = \Hom_{\bM(k)}(\IQ(0),M)$$ is left exact, and there is a natural injection of $F(M)$ into 
$$
G(M) = \Hom_{\mathbf{MHS}}(\IQ(0), \mathrm{R_{Hdg}}(M)),
$$ where $\mathbf{MHS}$ stands for the category of mixed Hodge structures and $\mathrm{R_{Hdg}} \colon \bM(k) \to \mathbf{MHS}$ for the Hodge realisation functor. To prove that $F \to G$ is an isomorphism in this case would be a kingly achievement. Unfortunately, the only thing we can say in this direction is that it would ``suffice'' to prove it for motives of the form $H^n(X, Y)(i)$. 
\end{par}

\begin{par}
Things are brighter in relative situations. Let us consider Nori motives over the function field $\IC(t)$. To define the category, it is better not to embed this field into $\IC$, though we could. Given a pair of varieties $(X,Y)$ over $\IC(t)$, we can choose a model $(\mathcal X, \mathcal Y)$ of it over the affine line~$\IA^1$. The relative cohomology $\mathcal H^n((\mathcal X, \mathcal Y)/\IA^1, \IQ)(i)$ is a constructible sheaf on~$\IA^1$, whose nearby fibre at $0\in \IA^1$ is independent of the choice of the model. We may use
$$[X,Y,n, i] \mapsto \mathcal H^n((\mathcal X, \mathcal Y)/\IA^1, \IQ)(i)_{\vec 0}$$
as our standard cohomology theory to construct motives over $\IC(t)$. The resulting category $\bM(\IC(t))$ comes then with a functor associating with a motive $M$ a local system $\mathcal L_M$ on some non-empty Zariski open subset $U_M$ of $\IA^1(\IC)$. The functor 
$$G\colon  M \longmapsto \Gamma(U_M, \mathcal L_M)$$
from the category of motives over $\IC(t)$ to the category of finite-dimensional rational vector spaces is left exact. Whenever a motive $M_0$ over $\IC(t)$ comes via base change from a motive over $\IC$, its associated local system is constant, given by the Betti realisation $\mathrm{R_B}(M_0).$ Thus, if $c(M) \subseteq M$ denotes the largest submotive of $M$ which comes via base change from a motive over $\IC$, then 
$$F\colon M \longmapsto \mathrm{R_B}(c(M))$$
is a left exact functor, and there is a natural injection $\varepsilon_M \colon F(M) \to G(M)$. A \emph{theorem of the fixed part} for motives over $\IC(t)$ would state that this injection is an isomorphism for all motives. Thanks to our main theorem, we only need to check that $\varepsilon_M$ is an isomorphism when $M$ is of the form $H^n(X,Y)(i)$, and in this situation a geometrical construction saves the day: we can again choose a model $(\mathcal X, \mathcal Y)$ of $(X,Y)$, regard this model as a pair of varieties over $\IC$, and consider the motive $M_0=H^n(\mathcal X, \mathcal Y)(i)$ and the canonical morphism $M_0 \to M$ induced by $X \to \mathcal X \times_\IC \spec(\IC(t))$. Now one can check that the image of $M_0 \to M$ is $c(M)$ and that the image of $\mathcal L_{M_0} \to \mathcal L_M$ are the invariants of $\mathcal L_M$.  
\end{par}
\begin{par}
Another, admittedly quite similar example is the case where we work with \emph{exponential motives}, as introduced in \cite{FJ}, instead of ordinary motives. Exponential motives come equipped with a perverse realisation, and there is a largest constant sheaf inside the perverse realisation of an exponential motive. On the other hand, an exponential motive contains a largest ordinary submotive, whose perverse realisation is constant. Our main theorem holds verbatim for exponential motives, and we can apply it in the same way. We postpone these applications to a future paper.
\end{par}
\end{para}

\vspace{4mm}
\begin{para}
Let us now explain what algebraic cogroups have to do with the main theorem. We focus here, and also later in the paper, on the problem of writing motives as quotients of sums of the form \eqref{Eqn:SumOfElementaryMotivesIntro}. We start trying to solve some particular cases. First, suppose a morphism of pairs of varieties $f \colon (X,Y) \to (X_1,Y_1)$ over $k$ is given, and consider the induced morphism of motives
$$
f^\ast\colon H^n(X_1,Y_1) \xrightarrow{\quad} H^n(X,Y).
$$
A not very difficult geometric construction (see Proposition \ref{Pro:KernelIsElementaryQuotient}) produces a pair $(X_2,Y_2)$ and an exact sequence of motives 
$$H^n(X_2,Y_2) \xrightarrow{\quad} H^n(X_1,Y_1) \xrightarrow{\:\:f^\ast\:\:} H^n(X,Y)$$
which exhibits the kernel of $f^\ast$ as a quotient of the motive $H^n(X_2,Y_2)$. Consider then the next best scenario, where a second morphism $g\colon (X,Y) \to (X_1,Y_1)$ is given, and we want to show that the kernel of a $\IZ$-linear combination such as
$$af^\ast + bg^\ast\colon H^n(X_1,Y_1) \xrightarrow{\quad} H^n(X,Y)$$
is a quotient of some motive $H^n(X_2,Y_2)$. Here we are at a loss at first sight, since there is no apparent way of recognising $af^\ast + bg^\ast$ as the morphism of motives induced by a morphism of varieties $af+bg$. Unless, of course, the pair $(X,Y)$ has in some way the structure of a cogroup. An arbitrary pair $(X,Y)$ will not admit such a structure. However, in algebraic topology we learn that the suspension of any topological space admits a canonical cogroup structure, so we should try to define a suspension $\Sigma(X,Y)$ together with a cogroup structure and a natural isomorphism of motives
$$
H^{n+1}(\Sigma(X,Y)) \cong H^n(X,Y).
$$
This is indeed possible, allowing us to show that kernels of linear combinations are quotients of motives of the form \eqref{Eqn:SumOfElementaryMotivesIntro}. This does not yet prove the main theorem for arbitrary motives, but solves the essential case.
\end{para}

\vspace{14mm}
\section{Cogroups in algebraic topology}

\begin{par}
This section serves as a reminder about cogroups in the classical setting of algebraic topology, if not for the convenience of the reader, at least as a rehearsal for the authors. What we call \emph{cogroup} here is also refered to as co-H-group,  H-cogroup, or homotopy cogroup in the literature. A detailed treatment of cogroups can be found \textit{e.g.} in Chapter 2 of~\cite{Ark11}. Throughout, we denote by $\bHo$ the category whose objects are pointed CW-complexes and whose morphisms are homotopy classes of base-point preserving continuous maps (the homotopies are also required to respect base points). All cohomology groups are understood to be with rational coefficients.  
\end{par}

\vspace{4mm}
\begin{para}
A \textit{cogroup} is a group object in the opposite category $\bHo^{\mathrm{op}}$. For this to make sense, it is of course necessary that the category $\bHo^{\mathrm{op}}$ has a product, which it indeed has. It is given by the coproduct in $\bHo$, which is the wedge product of pointed spaces
$$X \vee Y = (X\sqcup Y)/x_0\sim y_0$$
where $x_0$ and $y_0$ are the base points of $X$ and $Y$ respectively. In more detail, a cogroup is a pointed CW-complex $X$, together with maps
$$c\colon X \to X\vee X \qquad  i\colon X\to X$$
called \textit{comultiplication} and \textit{inversion}. As a counit serves the map to a point $e \colon X\to \{x_0\}$. We denote by $0$ the constant map $X\to X$ sending every element of $X$ to the base point. The following diagrams of continuous maps, expressing coassociativity and  the role of the inversion and counit, are required to commute up to homotopy: 
\begin{equation}\label{Eqn:CogroupAxiomDiagrams}
\begin{array}{cc}
\begin{diagram}
\node{X}\arrow{s,l}{c}\arrow{e,t}{c}\node{X\vee X}\arrow{s,r}{\id\vee c}\node{X}\arrow{s,l}{c}\arrow{se,=}\arrow{e,t}{c}\node{X\vee X}\arrow{s,r}{\id\vee e}\\
\node{X\vee X}\arrow{e,t}{c\vee\id}\node{X\vee X\vee X}\node{X\vee X}\arrow{e,t}{e\vee \id}\node{X}
\end{diagram}\\[15mm]
\begin{diagram}
\node[2]{X\vee X}\arrow{e,t}{\id\vee i}\node{X\vee X}\arrow{se,t}{\mathrm{fold}}\\
\node{X}\arrow[3]{e,t}{0}\arrow{ne,t}{c}\arrow{se,b}{c}\node[3]{X.}\\
\node[2]{X\vee X}\arrow{e,t}{i\vee \id}\node{X\vee X}\arrow{ne,b}{\mathrm{fold}}
\end{diagram}
\end{array}
\end{equation}

In the last diagram, we have also used the fold-map $X\vee X \to X$, which is the map given by the identity on both copies of $X$ in $X\vee X$. Its role is dual to the diagonal map of a space into its self-product. A cogroup is said to be \textit{cocommutative} if, moreover, the following diagram commutes up to homotopy: 
$$\begin{diagram}
\node[2]{X}\arrow{sw,t}{c}\arrow{se,t}{c}\\
\node{X\vee X}\arrow[2]{e,t}{\mathrm{flip}}\node[2]{X\vee X.}
\end{diagram}$$
\end{para}

\vspace{4mm}
\begin{para}\label{Par:TopologicalCogroupAndYoneda}
Let $X = (X,c,i)$ be a cogroup. For every pointed espace $Y$, the set $\Hom_{\bHo}(X,Y)$ of homotopy classes of maps from $X$ to $Y$ carries a group structure. The neutral element is the zero map, and the composition law is defined by declaring that the sum of two morphisms $f\colon X\to Y$ and $g\colon X \to Y$ is the morphism
\begin{equation}\label{Eqn:DefOfSumOfMorphisms}
X \xrightarrow{\:\:c\:\:}X\vee X\xrightarrow{\:\:f\vee g\:\:}Y\vee Y\xrightarrow{\:\: \mathrm{fold}\:\:}Y
\end{equation}
which is customarily written additively as $f+g$, despite the fact that the group operation need not be commutative. It is a standard consequence of Yoneda's lemma that to give a cogroup structure on $X$ amounts to giving a factorisation of the functor
$$\Hom_\bHo(X,-) \colon \bHo \xrightarrow{\quad} \bSet$$
through the category of groups. A justification for the arguably strange convention to write the group law on $\Hom_\bHo(X,Y)$ additively might be that this group structure is compatible with cohomology, in the sense that the maps between relative cohomology groups
\begin{equation}\label{Eqn:CogroupAndCohomologyOperation}
f^\ast + g^\ast \colon H^n(Y,y_0) \to H^n(X,x_0) \qquad (f + g)^\ast\colon H^n(Y,y_0) \to H^n(X,y_0) 
\end{equation}
are equal. This is shown \textit{e.g.} in Proposition 2.2.9 (5) of \cite{Ark11}. An easy consequence of this fact is that cogroups are connected. Indeed, taking for $f$ and $g$ the identity on $X$, we see that the multiplication-by-2 map on $H^0(X,x_0)$ is induced by a map $X \to X$ on the one hand, and on the other hand any automorphism of $H^0(X,x_0)$ induced by a map $X \to X$ is in an appropriate basis given by a permutation of basis vectors, hence has finite order. It follows that $H^0(X,x_0)$ is trivial, hence $X$ is connected.
\end{para}

\vspace{4mm}
\begin{para}\label{Par:S1TopologicalCogroup}
A non-trivial example of a cogroup is the circle $S^1$. The comultiplication is the pinch map $S^1 \to S^1\vee S^1$ and the inversion is the map $S^1\to S^1$ of degree $-1$, reversing the orientation of the circle. The factorisation of the functor
$$\Hom_\bHo(S^1,-) \colon \bHo \xrightarrow{\quad}\bSet$$
through the category of groups is the functor associating with a pointed CW-complex its fundamental group. In particular, $S^1$ is \emph{not} cocommutative. The equality of maps \eqref{Eqn:CogroupAndCohomologyOperation} amounts to the fact that the Hurewicz map
$$\pi_1(Y,y_0) \to H^1(Y, y_0)$$
is a group homomorphism. More generally, the suspension $\Sigma X = S^1\wedge X$ of any CW-complex $X$ admits a cogroup structure. The comultiplication on $\Sigma X$ is the map
$$\Sigma X = S^1\wedge X \xrightarrow{\:\mathrm{pinch}\wedge\id\:} (S^1\vee S^1)\wedge X = (S^1\wedge X) \vee (S^1\wedge X) = \Sigma X \vee \Sigma X$$
and the inversion is similarly induced from the inversion of the circle. A particular example of this construction is the cogroup structure on the higher-dimensional spheres \hbox{$S^{n+1} = \Sigma^nS^1$.} These are cocommutative for $n \geq 1$ and, more generally, for any space $X$ the double suspension $\Sigma^2 X$ is a cocommutative cogroup. All cogroups we will encounter later are suspensions of some space. It is surprisingly difficult to give examples of cogroups which are not suspensions, but they do exist, see \cite{BH86}. 
\end{para}

\vspace{4mm}
\begin{para}
We can transport these ideas to the category of pointed algebraic varieties in a straightforward fashion. It is not too clear what it means for two morphisms between two algebraic varieties to be homotopic, so let us for the time being just work with complex algebraic varieties, and say that morphisms between varieties are homotopic if the corresponding continuous maps between topological spaces of complex points are so. We may now call, naively, a pointed complex algebraic variety, together with algebraic maps
$$c\colon X \to X\vee X \qquad  i\colon X\to X$$
an algebraic cogroup if the topological space $X(\IC)$ together with the comultiplication $c$ and inversion $i$ is a cogroup.
\end{para}

\vspace{4mm}
\begin{prop}
There exist no non-trivial naive algebraic cogroups.
\end{prop}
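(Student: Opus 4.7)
The plan is to show that every naive algebraic cogroup structure forces the underlying topological space $X(\IC)$ to be contractible (at least rationally), so that the induced group structure on $\Hom_\bHo(X,Y)$ is trivial for every pointed CW-complex $Y$. The essential tool is to realise the wedge $X\vee X$ as the closed subvariety
\[
X \vee X \;=\; (X\times\{x_0\})\cup(\{x_0\}\times X)\subset X\times X,
\]
under which the comultiplication $c\colon X\to X\vee X$ becomes a pair $c=(c_1,c_2)$ of algebraic morphisms $c_j=p_j\circ c\colon X\to X$, where $p_j$ is the projection onto the $j$-th wedge factor (that collapses the other factor to $x_0$). The counit axiom from \eqref{Eqn:CogroupAxiomDiagrams} then says precisely that both $c_1$ and $c_2$ are pointed homotopic to $\id_X$.

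Next I exploit that the image of $c$ lies in the wedge and not in the full product: for every $\IC$-point $x\in X$ one has $c_1(x)=x_0$ or $c_2(x)=x_0$, so
\[
X\;=\;c_1^{-1}(x_0)\cup c_2^{-1}(x_0)
\]
is a union of two Zariski-closed subvarieties. For irreducible $X$, one of these must be all of $X$, say $c_1\equiv x_0$; combined with $c_1\sim\id_X$, this forces $\id_X$ to be pointed null-homotopic, so $X(\IC)$ is contractible. For reducible but necessarily connected $X$ (by \ref{Par:TopologicalCogroupAndYoneda}), the same reasoning applied to each irreducible component $X_j$ shows that every inclusion $X_j\hookrightarrow X$ is pointed null-homotopic. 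Combined with \eqref{Eqn:CogroupAndCohomologyOperation} applied to $f=g=\id_X$ --- which produces the algebraic self-map $\mathrm{fold}\circ c\colon X\to X$ inducing multiplication by $2$ on $\tilde H^*(X(\IC),\IQ)$, while its restriction to each component is homotopic to the inclusion --- one deduces that the restriction $H^{>0}(X,\IQ)\to H^{>0}(X_j,\IQ)$ is zero for every $j$, and a Mayer--Vietoris argument concludes that the reduced rational cohomology of $X(\IC)$ vanishes.

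The conceptual heart of the proof is the clash between algebraic irreducibility and the cogroup axioms: topologically, a wedge $X\vee X$ has enough room to carry an honest comultiplication (think of the pinch map on a suspension), but algebraically, irreducibility forces the image of $c$ to lie in a single branch and hence trivialises $c$ up to homotopy. The main obstacle, beyond the irreducible case which is essentially immediate, is the careful cohomological bookkeeping in the reducible case; this very delicacy is part of the motivation for developing a refined, non-naive notion of algebraic cogroup in the rest of the paper.
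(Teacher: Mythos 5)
Your setup is correct and close in spirit to the paper's. The observation that the wedge condition forces $X = c_1^{-1}(x_0)\cup c_2^{-1}(x_0)$ and hence, by irreducibility of each component $X_j$, one of $c_1|_{X_j}$, $c_2|_{X_j}$ is constant, is exactly the phenomenon the paper exploits; from this your intermediate claim that every inclusion $X_j\hookrightarrow X$ is pointed null-homotopic follows correctly. Note also that you only aim to show that $\tilde H^*(X(\IC);\IQ)$ vanishes, whereas the paper proves the stronger fact that $X(\IC)$ is contractible.

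There is, however, a genuine gap in the final step of the reducible case. Knowing that each restriction $\tilde H^{>0}(X)\to\tilde H^{>0}(X_j)$ vanishes --- or even the stronger fact that each inclusion $\iota_j\colon X_j\hookrightarrow X$ is null-homotopic --- does \emph{not} imply $\tilde H^*(X;\IQ)=0$, and no Mayer--Vietoris argument will close this. Concretely: take $X$ to be the boundary of a triangle (three line segments, consecutive pairs meeting at a vertex). Each segment is contractible, so every inclusion $\iota_j$ is null-homotopic and every restriction map vanishes trivially, yet $\tilde H^1(X)\cong\IQ$. Cohomology can survive even though it dies on every closed piece; Mayer--Vietoris only rewrites $\tilde H^n(X)$ as a quotient of cohomology of intersections. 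The auxiliary observation that $\mathrm{fold}\circ c$ multiplies $\tilde H^*$ by $2$ merely re-derives $\iota_j^*=0$ and cannot supply the missing ingredient. What the paper does instead, and what you would need, is to \emph{compose} $c_1$ and $c_2$ iteratively rather than restricting them to components one at a time: each $c_\alpha$ maps each irreducible component either to $x_0$ or into a single component (images of irreducibles are irreducible), so one can choose $\alpha_1$ killing $X_1$, then $\alpha_2$ killing the component containing $c_{\alpha_1}(X_2)$, and so on; after at most $n$ steps the composite $c_{\alpha_n}\cdots c_{\alpha_1}$ is the constant map while remaining homotopic to $\id_X$, giving contractibility directly.
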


\begin{proof}
Let $X$ be a complex algebraic variety with a fixed base point, write $e \colon X\to \ast$ for the structural morphism, and suppose that there is a morphism of algebraic varieties $c\colon X \to X\vee X$ such that the composite maps
$$f_1\colon X \xrightarrow{\:\:c\:\:} X\vee X \xrightarrow{\:\:\id\vee e\:\:} X\qquad f_2\colon X \xrightarrow{\:\:c\:\:} X\vee X \xrightarrow{\:\:e\vee\id\:\:} X$$
are homotopic to the identity. We will show that $X$ is contractible. In fact, a composite of the homotopy equivalences $f_{\alpha_n}\cdots f_{\alpha_2}f_{\alpha_1}$ is the trivial map $X \to \ast \to X$. Let $X_1,X_2,\ldots, X_n$ be the irreducible components of $X$. The morphism of algebraic varieties $c$ sends each irreducible component of $X$ into one of the irreducible components of $X\vee X$, which are of the form $X_i\vee \ast$ or $\ast \vee X_i$. Therefore, each irreducible component of $X$ is sent to the base point by $f_1$ or by $f_2$. Let's say $f_{\alpha_1}$ sends $X_1$ to the base point. The component $X_2$ is then sent by $f_{\alpha_1}$ into one of the irreducible components of $X$, hence we can find $f_{\alpha_2}$ such that $f_{\alpha_2}f_{\alpha_1}$ sends $X_1\cup X_2$ to the base point. Proceeding this way, we find that indeed a composition $f_{\alpha_n}\cdots f_{\alpha_2}f_{\alpha_1}$ is the trivial map.
\end{proof}

\vspace{14mm}%%%%%%%%%%%%%%%%%%%%%%%%%%%%%%%%%%%%%%%%%%%%%%%%%%%%
\section{Elementary homotopy theory with pairs of varieties}%%%%%
%%%%%%%%%%%%%%%%%%%%%%%%%%%%%%%%%%%%%%%%%%%%%%%%%%%%%%%%%%%%%%%%%

\begin{par}
Let us fix a subfield $k$ of $\IC$, and convene that all varieties and morphisms of varieties are defined over $k$. We say that two morphisms of algebraic varieties $f$ and $g$ from $X$ to $Y$ are \textit{homotopic} if the induced continuous maps from $X(\IC)$ to $Y(\IC)$ are homotopic. We do not require that the homotopy between these continuous maps is in any way induced by a morphism of algebraic varieties. Similarly, we say that a morphism of algebraic varieties $f\colon X\to Y$ is a \textit{homotopy equivalence} if the induced continuous map $f\colon X(\IC) \to Y(\IC)$ is a homotopy equivalence. Again, we do not ask for a homotopy inverse which is in any way induced by a morphism of varieties from $Y$ to $X$. There are many examples of homotopy equivalences between algebraic varieties which do not have a homotopy inverse which is given by an algebraic map. For instance, the morphism of algebraic varieties
\begin{equation}\label{Eqn:JouanolouExample}
\{\mbox{idempotent $2\times 2$-matrices of rank 1}\}\to \IP^1    
\end{equation}
sending $A$ to the line generated by the columns of $A$ is a homotopy equivalence, since its fibres are torsors under affine spaces. This map cannot have an algebraic homotopy inverse since the space of idempotent matrices of rank $1$ is affine, morphisms from connected projective varieties to affine varieties are constant, and $\IP^1$ is not contractible. Similarly, the morphism of affine varieties 
$$(\IA^1 \sqcup \IA^1)/_{0 \sim 0, 1\sim 1} \xrightarrow{\quad} \IA^1/_{0\sim 1}$$
contracting one of the lines to a point is a homotopy equivalence, and yet has no homotopy inverse which is given by a morphism of algebraic varieties. 
\end{par}

\vspace{4mm}
\begin{para}
We call \emph{pair of affine algebraic varieties} any pair $(X,Y)$ consisting of an affine variety $X$ over $k$ and a closed, non-empty subvariety $Y$ of $X$. With the obvious notion of morphisms and compositions, these pairs of algebraic varieties form a category $\bAff_2(k)$. Since $k$ is embedded in the complex numbers, there is a well-defined functor
\begin{equation}\label{Eqn:AnalytificationRaw}
\begin{array}{rcl}
(-)^\top\colon \bAff_2(k) & \to & \bHo\\  
(X,Y) & \mapsto & (X(\IC)/Y(\IC), Y(\IC)/Y(\IC))
\end{array}
\end{equation}
where $\bHo$ stands as in the previous section for the category of pointed CW-complexes and continuous maps up to homotopy. This functor is compatible with cohomology, in the sense that there is a canonical and natural isomorphism of vector spaces
\begin{equation}\label{Eqn:AnalytificationAndCohomology}
H^n(X(\IC), Y(\IC); \IQ) \cong H^n(X(\IC)/Y(\IC), Y(\IC)/Y(\IC); \IQ)
\end{equation}
for each pair $(X,Y)$. Indeed, the cohomology of the pair $(X(\IC),Y(\IC))$ is by construction the cohomology of the mapping cone of the inclusion $i\colon Y(\IC) \to X(\IC)$. Since $i$ is the inclusion of a closed algebraic subvariety, it is with respect to an appropriate CW-structure the inclusion of a finite, closed CW-complex into a finite CW-complex, hence is in particular a cofibration, and we know that the mapping cone of a cofibration is homotopic to the corresponding quotient space.
\end{para}

\vspace{4mm}
\begin{para}
We say that a morphism $f$ in $\bAff_2(k)$ is a \emph{homotopy equivalence} if the induced continuous map $f^\top$ between pointed topological spaces is, and we say that two morphisms $f$ and $g$ with same source and target are \emph{homotopic to each other} if the continuous maps $f^\top$ and $g^\top$ are. Being homotopic is an equivalence relation on morphisms in $\bAff_2(k)$ which is compatible with compositions, so there is a well-defined category $\bAff_2(k)_\simeq$ whose objects are pairs of affine algebraic varieties over $k$, and whose morphisms are homotopy classes of morphisms of pairs of algebraic varieties. By definition, the functor \eqref{Eqn:AnalytificationRaw} induces a functor
\begin{equation}\label{Eqn:AnalytificationUpToHomotopy}
(-)^\top\colon \bAff_2(k)_\simeq \to \bHo
\end{equation}
which sends homotopy equivalences to isomorphisms. As we have pointed out at the beginning of this section, the functor \eqref{Eqn:AnalytificationUpToHomotopy} is not conservative: it sends morphisms which are not isomorphisms to isomorphisms. Forcing \eqref{Eqn:AnalytificationUpToHomotopy} to be conservative leads to the following definition: 
\end{para}

\vspace{4mm}
\begin{defn}
We call \emph{category of algebraic homotopy types over $k$} and denote by $\bHo(k)$ the localisation in the class of homotopy equivalences of the category $\bAff_2(k)_\simeq$.
\end{defn}

\vspace{4mm}
\begin{para}
By construction, the functor $(-)^\top: \bAff_2(k) \to \bHo$ induces a functor
$$(-)^\top: \bHo(k) \to \bHo$$
which is faithful and conservative. Morphism sets in $\bHo(k)$ between any two objects contain a distinguished element, the zero morphism, and are in particular non-empty. Denoting by $\ast = \spec(k)$ the variety with one point, the zero morphism $(X,Y) \to (X',Y')$ is the composite morphism (the variety $Y'$ may have no rational points)
$$(X,Y) \xrightarrow{\:\:\:\: } (\ast, \ast) \xrightarrow{\:\:s^{-1}\:\:} (Y',Y') \xrightarrow{\:\:\subseteq\:\:} (X', Y')$$
where $s$ is the homotopy equivalence $(Y', Y') \to (\ast, \ast)$. As we shall explain, the class of homotopy equivalences in $\bAff_2(k)_\simeq$ admits a calculus of left fractions. Consequently every morphism $(X,Y)\to (X_1,Y_1)$ in $\bHo(k)$ can be represented by a roof $s^{-1}f$
$$\begin{diagram}
\node[3]{(\widetilde X,\widetilde Y)}\\
\node{(X,Y)}\arrow{nee,t}{f}\node[3]{(X_1,Y_1)}\arrow{nw,t}{s}
\end{diagram}$$
where $s$ is a homotopy equivalence and $f$ an arbitrary morphism of pairs. It is obvious that the class of homotopy equivalences in $\bAff_2(k)_\simeq$ contains all isomorphisms, is stable under composition, and admits left cancellation: given morphisms of pairs
\begin{displaymath}
\xymatrix{(\widetilde X,\widetilde Y) \ar[rr]^{s} & & (X,Y) \ar@/_.5pc/[rr]_{f} \ar@/^.5pc/[rr]^{g} & &(X_1,Y_1)}
\end{displaymath}
where $s$ is a homotopy equivalence, then $f$ is homotopic to $g$ if $fs$ is homotopic to $gs$. What remains to check in order to show that homotopy equivalences admit a left calculus of fractions is the left Ore condition, which is the statement of Proposition \ref{Pro:LeftOreCondition} below. We refer the reader to \cite[III, \S2]{GM96} for details about calculus of fractions in a category. Our choice to consider only pairs of affine varieties is merely for convenience, so we can embedd every variety $X$ into a contractible variety $\IA^N$, and has otherwise no big consequences. Indeed, Jouanolou's trick provides for every quasiprojective variety $X$ a homotopy equivalence $\widetilde X \to X$ where $\widetilde X$ is affine, see \cite[Lemme 1.5]{Jou73}. An elementary example of this trick is given by \eqref{Eqn:JouanolouExample}.
\end{para}

\begin{par}
In algebraic topology, various instances of spheres are used, each having its distinct advantages. For example, the boundary of the standard $(n+1)$-simplex $\partial \Delta^{n+1}$ as well as the pair of spaces $(\Delta^n, \partial \Delta^n)$ are models for the $n$-sphere. These models are very useful for comparing homology and homotopy groups, notably in the proof of Hurewicz's Theorem. Other models of the $n$-sphere are the $n$-fold suspension $\Sigma^nS^0$ of the $0$-sphere $S^0$ which is useful when dealing with long exact homotopy sequences, and the set of vectors of unit length in $\IR^{n+1}$, useful when dealing with differential structures. If we are only interested in spaces up to homotopy, the set of nonzero vectors in $\IR^{n+1}$ can also be used as a sphere. For the circle, we have on top of all these possibilities also the models $\IR/\IZ$ and $\IC^\times/\IR_{>0}$, which are particularly suitable when dealing with the group structure on the circle. Similarly, there are many choices for a model of the circle in $\bHo(k)$.
\end{par}

\begin{defn}
We call \emph{standard algebraic circle} the object $S^1(k) = (\IA^1,\{0,1\})$ of $\bHo(k)$.
\end{defn}

\begin{par}
There is a canonical isomorphism $S^1(k)^\top \cong [0,1]_{0 \sim 1} = S^1$ in $\bHo$. Marking any other two distinct rational points on $\IA^1$ results in an isomorphic object, but the isomorphism becomes canonical only once an ordering of the two marked points is chosen. Yet another model of the circle is the algebraic $n$-gon, which is the affine variety obtained by gluing $n$ copies of $\IA^1$ in the obvious way, marked at any point. A particular case is the $1$-gon
$$\spec(\{f\in k[t]\:|\:\: f(0)=f(1)\})$$
marked at the singular point. If the base field $k$ admits a quadratic extension, we could also mark $\IA^1$ in the zeroes of a nonsplit quadratic polynomial. The resulting object in $\bHo(k)$ is not isomorphic to the standard circle, but is obviously a \emph{form} of it. Also, we could pick any two quadratic polynomials $f$ and $g$, and consider the square $f(x)g(y)=0$ embedded in the plane $\IA^2$. This square becomes isomorphic to the standard circle over a biquadratic extension of $k$. We don't know how to classify all Galois forms of the standard circle in $\bHo(k)$. Lastly, we could also think of $(\IG_m, 1)$ as a circle. It is not isomorphic to the standard circle over any base field, and is for our purposes the wrong choice.
\end{par}

\vspace{4mm}
\begin{defn}
Let $(X,Y)$ and $(X',Y')$ be pairs of algebraic varieties. We call 
$$(X,Y) \vee (X',Y') = (X \sqcup X', Y\sqcup Y')$$
the \emph{wedge}, and 
$$(X,Y) \wedge (X',Y') = (X \times X', (X\times  Y')\cup (Y\times X'))$$
the \emph{smash} product of $(X,Y)$ and $(X',Y')$. We call the pairs
\begin{eqnarray*}
 C(X,Y) & =  & (X,Y)\wedge (\IA^1,\{0\}) = (X\times \IA^1, (X\times \{0\})\cup (Y\times \IA^1)) \\
 \Sigma(X,Y) & = & (X, Y) \wedge (\IA^1 , \{0,1\}) = (X\times \IA^1, (X\times \{0,1\})\cup (Y\times \IA^1))  
\end{eqnarray*}
the \emph{cone} and the \emph{suspension} of $(X,Y)$.
\end{defn}

\vspace{4mm}
\begin{para}\label{Par:Puppe}
Cones and suspensions do what they ought to do. They are compatible with the functor $(-)^\top$ in the obvious way. There is a natural morphism $(X,Y) \to C(X,Y)$, namely the one sending $x$ to $(x,1)$. The cone $C(X,Y)$ is contractible, in the sense that the unique morphism $C(X,Y) \to (\ast,\ast)$ is a homotopy equivalence. The triple of pairs
$$(X,Y) \to C(X,Y) \to \Sigma(X,Y)$$
induces a long exact sequence of cohomology groups, which, since $C(X,Y)$ is contractible, degenerates to isomorphisms $H^n(\Sigma(X,Y)) \cong H^{n+1}(X,Y)$. Indeed, the morphism of pairs
\begin{equation}\label{Eqn:HReplacement}
(X,Y) \to ((X \times\{0,1\}) \cup (Y \times\IA^1), (X\times\{0\}) \cup (Y\times \IA^1))    
\end{equation}
sending $x$ to $(x,1)$ is a homotopy equivalence, hence an isomorphism in $\bHo(k)$. The long exact sequence of cone and suspension can be identified with the the long exact sequence of the following triple of spaces.
\begin{equation}\label{Eqn:ConeSuspensionTriple}
(X\times\{0\}) \cup (Y\times \IA^1) \:\: \subseteq\:\: (X \times\{0,1\}) \cup (Y \times\IA^1) \:\:\subseteq\:\: X \times \IA^1
\end{equation}
Given a triple of nonempty varieties $Z\subseteq Y \subseteq X$, we should look at the morphisms $(Y,Z)\to (X,Z) \to (X,Y)$ as a cofibre sequence. Associated with it comes the long Puppe-sequence
\begin{equation}\label{Eqn:PuppeSequence}
(Y,Z)\to (X,Z) \to (X,Y) \to \Sigma(Y,Z) \to \Sigma(X,Z) \to \cdots 
\end{equation}
where the connecting morphism $(X,Y) \to \Sigma(Y,Z)$ needs some explanation. It is a morphism in the category $\bHo(k)$, given by the roof
\begin{equation}\label{Eqn:PuppeConnectorRoof}
\begin{diagram}
\node[3]{(X \times \IA^1, X\cup_ZY)}\\
\node{(X,Y)}\arrow{nee,t}{f}\node[3]{\Sigma(Y,Z)}\arrow{nw,t}{s}
\end{diagram}
\end{equation}
where $X\cup_ZY$ stands for the union $(X\times\{1\}) \cup(Z\times\IA^1) \cup(Y\times\{0\})$, in which the topologist may recognise a homotopy pushout, as we explain in the proof of Proposition \ref{Pro:LeftOreCondition}. The map $f$ in \eqref{Eqn:PuppeConnectorRoof} sends $x$ to $(x,0)$, and the homotopy equivalence $s$ is defined by the inclusion $Y \times \IA^1 \subseteq X \times \IA^1$. For later reference, we observe that the diagram of vector spaces
\begin{equation}\label{Eqn:PuppeConnectorInCohomology}
\begin{diagram}
\node{H^n(Y,Z)}\arrow{s,l}{\cong}\arrow{e,t}{\partial}\node{H^{n+1}(X,Y)}\arrow{s,r}{s^\ast}\\
\node{H^{n+1}(\Sigma(Y,Z))}\arrow{e,t}{f^\ast}\node{H^{n+1}(X\times\IA^1,X\cup_ZY)}
\end{diagram}
\end{equation}
commutes, so the connecting morphism $\partial$ in the long exact sequence is the composite of the suspension isomorphism and the morphism induced in cohomology by $s^{-1}f$.
\end{para}

\vspace{4mm}
\begin{lem}\label{Lem:ImmersionHomotopyFactorisation}
Any morphism of pairs of affine algebraic varieties $f\colon (X,Y) \to (X_1,Y_1)$ can be factorised as 
\begin{displaymath}
\xymatrix{
(X, Y) \ar[rr]^f \ar[rd]_{g} & & (X_1, Y_1) \\
& (\widetilde X_1,\widetilde Y_1) \ar[ur]_s &}
\end{displaymath}
%$$(X,Y) \xrightarrow{\:\:g\:\:} (\widetilde X_1,\widetilde Y_1) \xrightarrow{\:\:s\:\:} (X_1,Y_1)$$
where $g$ is a closed immersion, and $s$ is a homotopy equivalence which admits an algebraic homotopy inverse. 
\end{lem}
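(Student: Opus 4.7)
The plan is to use a standard graph-of-morphism construction, adapted to pairs of affine varieties. Since $X$ is affine, I fix a closed embedding $\iota \colon X \hookrightarrow \IA^N$ for some $N$ (this is precisely where the affineness hypothesis gets used). I then take as the middle pair
$$(\widetilde X_1, \widetilde Y_1) \;=\; (\IA^N \times X_1,\; \IA^N \times Y_1),$$
which is a pair of affine algebraic varieties with non-empty closed second component.

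Next I would define $g \colon (X,Y) \to (\widetilde X_1, \widetilde Y_1)$ by $x \mapsto (\iota(x), f(x))$. Since $f(Y) \subseteq Y_1$, this is a morphism of pairs. To see it is a closed immersion, I factor it as $X \xrightarrow{\sim} \Gamma_f \hookrightarrow X \times X_1 \xrightarrow{\iota \times \id} \IA^N \times X_1$, where the first inclusion is closed because $X_1$ is separated, and the second is a closed immersion because $\iota$ is. On the other side I let $s \colon (\widetilde X_1, \widetilde Y_1) \to (X_1, Y_1)$ be projection onto the second factor, with algebraic candidate homotopy inverse $r \colon (X_1, Y_1) \to (\widetilde X_1, \widetilde Y_1)$ given by $x_1 \mapsto (0, x_1)$; both are visibly morphisms of pairs over $k$.

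It is immediate that $s \circ g = f$ and $s \circ r = \id$. The other composition $r \circ s \colon (v, x_1) \mapsto (0, x_1)$ is homotopic to the identity via the (purely topological) straight-line homotopy $((v, x_1), t) \mapsto (tv, x_1)$, which preserves the subspace $\IA^N(\IC) \times Y_1(\IC)$ and the base point, so $s$ is a homotopy equivalence of pairs with algebraic inverse $r$. The only place a subtlety could arise is the closed-immersion check for $g$, but this reduces to the standard fact that the graph of a morphism into a separated variety is closed; there is no real obstacle.
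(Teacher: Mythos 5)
Your proposal is correct and follows essentially the same construction as the paper: embedding $X$ in $\IA^N$, taking $(\widetilde X_1,\widetilde Y_1)=(\IA^N\times X_1,\IA^N\times Y_1)$, $g(x)=(\iota(x),f(x))$, and $s$ the projection with inverse $x_1\mapsto(0,x_1)$. The only cosmetic difference is that you verify the closed-immersion property of $g$ via the graph of $f$, whereas the paper observes directly that $g$ is a closed immersion because its composition with the projection to $\IA^N$ is the closed immersion $\iota$; both are fine.
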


\begin{proof}
Since $X$ is affine, there exists a closed immersion $e \colon X\to \IA^N$ for some sufficiently large $N$. Consider the pair $(\widetilde X_1, \widetilde Y_1) = (\IA^N\times X_1, \IA^N\times Y_1)$, and define the morphisms $g$ and~$s$ by $g(x) = (e(x),f(x))$ and $s(t,x_1)=x_1$. The morphism $g$ is a closed immersion because $e$ is, the morphism $s$ is a homotopy equivalence because $\IC^N$ is contractible, and $f = sg$ holds by construction. An algebraic homotopy inverse to $s$ is the morphism $(X_1,Y_1) \to (\widetilde X_1,\widetilde Y_1)$ sending $x_1$ to $(0,x_1)$.
\end{proof}

\vspace{4mm}
\begin{lem}\label{Lem:PushoutOfClosedImmersions}
Let $f_1\colon (X,Y) \to (X_1,Y_1)$ and $f_2\colon (X,Y) \to (X_2,Y_2)$ be morphisms of pairs of algebraic varieties, both of them given by closed immersions. The pushout
\begin{equation}\label{Eqn:PushoutOfClosedImmersions}
\begin{diagram}
\node{(X,Y)}\arrow{s,l}{f_1}\arrow{e,t}{f_2}\node{(X_2, Y_2)}\arrow{s,l}{g_2}\\
\node{(X_1,Y_1)}\arrow{e,t}{g_1} \node{(X', Y')}
\end{diagram}
\end{equation}
exists in the category of pairs of varieties. If $X_1$ and $X_2$ are affine, then so is $X'$. 
\end{lem}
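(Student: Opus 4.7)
My approach is to build the pushout at the level of coordinate rings. Since a closed immersion is an affine morphism, $X_1$ affine forces $X$ to be affine, and the hypothesis on $X_1, X_2$ thus reduces everything to commutative algebra. Writing $A_i := \Gamma(X_i, \mathcal{O})$, $A := \Gamma(X, \mathcal{O})$, and $\alpha_i \colon A_i \twoheadrightarrow A$ for the surjections induced by $f_i$, I propose to take
$$A' := A_1 \times_A A_2 = \{(a_1, a_2) \in A_1 \times A_2 : \alpha_1(a_1) = \alpha_2(a_2)\}$$
as the fiber product of commutative rings and set $X' := \spec(A')$. Once we know $A'$ is a finitely generated reduced $k$-algebra, the universal property of $X'$ as a pushout in affine $k$-varieties is immediate from the universal property of fiber products of rings.

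Showing that $A'$ is a finitely generated reduced $k$-algebra is the main obstacle. Reducedness is clear, since $A' \hookrightarrow A_1 \times A_2$ and the latter ring is reduced. Finite generation I would deduce from the Artin--Tate lemma applied to the chain $k \subseteq A' \subseteq A_1 \times A_2$: the outer ring is of finite type over $k$, so it suffices to verify that it is finite as an $A'$-module. I claim that $1$ and $(0,1)$ already generate it. Indeed, given any $(a_1, a_2) \in A_1 \times A_2$, use the surjectivity of $\alpha_2$ to lift $\alpha_1(a_1)$ to some $\widetilde a_1 \in A_2$, and then the surjectivity of $\alpha_1$ to lift $\alpha_2(a_2 - \widetilde a_1)$ to some $s_1 \in A_1$; both pairs $(a_1, \widetilde a_1)$ and $(s_1, a_2 - \widetilde a_1)$ lie in $A'$, and
$$(a_1, a_2) \;=\; (a_1, \widetilde a_1)\cdot 1 \;+\; (s_1, a_2 - \widetilde a_1)\cdot (0, 1).$$

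It remains to install the pair structure. I would define $Y' \subseteq X'$ to be the scheme-theoretic image of $Y_1 \sqcup Y_2 \to X'$, equivalently the closed subvariety cut out by the ideal $I_{Y_1} \cap I_{Y_2}$ of $A'$, where $I_{Y_i} \subseteq A'$ denotes the preimage of the defining ideal of $Y_i$ under the projection $A' \to A_i$. The quotient $A'/(I_{Y_1} \cap I_{Y_2})$ is reduced, as the intersection of radical ideals in a reduced ring is radical, so $(X', Y')$ is a genuine pair of affine varieties. The universal property of $(X', Y')$ in the category of pairs is then formal: for any test pair $(W, V)$ and compatible morphisms $g_i \colon (X_i, Y_i) \to (W, V)$, the unique map $h \colon X' \to W$ provided by the variety-level pushout sends each $Y_i$ into $V$ by hypothesis, and therefore sends $Y' = Y_1 \cup Y_2$ into $V$, giving the required morphism of pairs. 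The second assertion of the lemma is then automatic, since $X' = \spec(A')$ is affine by construction.
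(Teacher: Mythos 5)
Your construction is the same as the paper's: form $A' = A_1 \times_A A_2$ and take $Y'$ to be the image of $Y_1 \sqcup Y_2$. The finite-generation argument via Artin--Tate (checking that $1$ and $(0,1)$ generate $A_1 \times A_2$ as an $A'$-module) is a nice explicit verification that the paper leaves implicit, and your reducedness checks for both $A'$ and $A'/(I_{Y_1} \cap I_{Y_2})$ are correct.

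There is, however, a gap you should be aware of. The lemma's first sentence claims the pushout exists in the category of pairs of (not necessarily affine) varieties, and you do not address this; your reduction at the outset to the affine case discards that claim. More importantly, even in the case where $X_1, X_2$ are affine, your argument establishes the universal property of $X' = \spec(A')$ only against \emph{affine} test objects: the anti-equivalence between affine schemes and rings tells you $\spec(A')$ is a pushout in affine schemes, but a pushout in a full subcategory need not remain a pushout in the larger category. When you later invoke ``the unique map $h \colon X' \to W$ provided by the variety-level pushout'' for a general test pair $(W,V)$, you are using a universal property you have not established. The fact that the affine pushout of two closed immersions along a fibered product of rings is actually a pushout among all schemes is true but nontrivial --- it is exactly what the paper cites Schwede \cite[Theorem 3.3 and Corollary 3.7]{Sch05} for. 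You should either cite this as well, or supply the (local) argument that the continuous gluing of $h_1, h_2$ is a morphism of locally ringed spaces. As a very minor point, you reuse the letter $g_i$ both for the structure maps $X_i \to X'$ from the lemma and for the test morphisms $X_i \to W$, and write $Y' = Y_1 \cup Y_2$ where $g_1(Y_1) \cup g_2(Y_2)$ is meant.
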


\begin{proof}
We define $X'$ to be the variety obtained by glueing $X_1$ and $X_2$ along $X$. There is an obvious way of gluing ringed spaces, and one can check that gluing varieties along a common closed subvariety results is a variety, as is done in \cite[Theorem 3.3 and Corollary 3.7]{Sch05}. If $X = \spec(A)$ and $X_i=\spec(A_i)$ are all affine, then $X'$ is affine, and indeed the spectrum of the ring
$$A' = \{(a_1,a_2) \in A_1\times A_2 \:|\:\: \varphi_1(a_1)=\varphi_2(a_2)\}$$
where $\varphi_i\colon A_i\to A$ are the surjective ring morphisms corresponding to the inclusions \hbox{$X\to X_i$.} There are canonical closed immersions $g_i\colon X_i\to X'$, and with these, the diagram
$$\begin{diagram}
\node{X}\arrow{s,l}{f_1}\arrow{e,t}{f_2}\node{X_2}\arrow{s,r}{g_2}\\
\node{X_1}\arrow{e,t}{g_1} \node{X' = X_1\cup_XX_2\hspace{-22mm}}
\end{diagram}$$
commutes and is a push-out diagram in the category of varieties. As a closed subvariety $Y' \subseteq X'$ we choose the union $g_1(Y_1) \cup g_2(Y_2)$. With this choice for $Y'$ we obtain a diagram of the shape \eqref{Eqn:PushoutOfClosedImmersions}, and the universal property of the pair $(X',Y')$ in the category of pairs of varieties is a direct consequence of the universal property of $X'$ in the category of varieties. 
\end{proof}

\vspace{4mm}
\begin{prop}\label{Pro:LeftOreCondition}
Let $s: (X,Y) \to (X_1, Y_1)$ be a homotopy equivalence between affine pairs and let $f\colon (X,Y) \to (X_2, Y_2)$ be an arbitrary morphism of affine pairs of varieties. There exists a diagram of affine pairs of varieties
\begin{equation}\label{Eqn:HomotopyPushAlgebraic}
\begin{diagram}
\node{(X,Y)}\arrow{s,l}{f}\arrow{e,t}{s}\node{(\widetilde X, \widetilde Y)}\arrow{s,r}{\widetilde f}\\
\node{(X_1,Y_1)}\arrow{e,t}{\widetilde s}\node{(\widetilde X_1, \widetilde Y_1)}
\end{diagram}
\end{equation}
which commutes up to homotopy, and where $\widetilde s$ is a homotopy equivalence.
\end{prop}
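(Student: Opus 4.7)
The plan is to reduce to the case where both legs of the span are closed immersions, form a strict pushout there with the help of Lemma \ref{Lem:PushoutOfClosedImmersions}, and then check that the leg opposite the closed immersion $j$ produced below inherits the property of being a homotopy equivalence.

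First, apply Lemma \ref{Lem:ImmersionHomotopyFactorisation} to both $s$ and $f$ in order to write $s = \sigma \circ j$ and $f = \tau \circ i$, where $i$ and $j$ are closed immersions of affine pairs and $\sigma$, $\tau$ are homotopy equivalences that come equipped with algebraic homotopy inverses $\sigma^{-1}$, $\tau^{-1}$. Since $s$ and $\sigma$ are homotopy equivalences, the two-out-of-three property (inherited from $\bHo$ via the conservative functor $(-)^\top$) forces $j$ to be one as well, so $j$ is simultaneously a closed immersion and a homotopy equivalence.

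Next, apply Lemma \ref{Lem:PushoutOfClosedImmersions} to the pair of closed immersions $(i, j)$ sharing the source $(X, Y)$, obtaining an affine strict pushout square with apex $(P, Q)$ and cobase-change closed immersions $i' , j'$. Use $(P, Q)$ as the bottom-right corner of the square we are trying to build, and bridge $(\widetilde X, \widetilde Y)$ and $(X_1, Y_1)$ to $(P, Q)$ by precomposing $j'$ with $\sigma^{-1}$ and $i'$ with $\tau^{-1}$, respectively. Homotopy commutativity of the resulting square reduces to the commutativity of the strict pushout square together with the relations $\sigma \sigma^{-1} \simeq \id$ and $\tau \tau^{-1} \simeq \id$.

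The only non-formal point, and the one I expect to be the main obstacle, is to show that the constructed $\widetilde s$ is a homotopy equivalence. After composing with the algebraic homotopy inverse of $\tau$, this reduces to checking that the leg $i'$ of the strict pushout square, opposite $j$, is a homotopy equivalence. I would argue on complex points: the explicit gluing construction in Lemma \ref{Lem:PushoutOfClosedImmersions} shows that the algebraic pushout becomes a set-theoretic and topological pushout after applying $(-)^\top$, while a closed immersion of algebraic varieties is a CW-inclusion on complex points, that is, a topological cofibration (the same observation already used in the text after \eqref{Eqn:AnalytificationAndCohomology}). Hence the topological square is a homotopy pushout, and since the cobase change of a homotopy equivalence along a topological cofibration is again a homotopy equivalence, $(i')^\top$ is a homotopy equivalence. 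Unwinding the construction gives the required homotopy equivalence $\widetilde s$.
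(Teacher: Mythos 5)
Your plan---factor $s$ and $f$ into closed immersions followed by homotopy equivalences with algebraic inverses, form the strict pushout of the two closed immersions from Lemma \ref{Lem:PushoutOfClosedImmersions}, and conclude via left properness---is in the same spirit as the paper's proof, which also first reduces $s$ and $f$ to closed immersions via Lemma \ref{Lem:ImmersionHomotopyFactorisation}. But there is a genuine gap at the step where you claim the strict algebraic pushout becomes a homotopy pushout on complex points.

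The functor $(-)^\top$ sends $(X,Y)$ to the pointed quotient $X(\IC)/Y(\IC)$, and a closed immersion of varieties $g\colon X\to X_1$ does \emph{not} in general induce a cofibration on these quotients. The map $X(\IC)/Y(\IC)\to X_1(\IC)/Y_1(\IC)$ is a closed inclusion (hence a cofibration) precisely when $g^{-1}(Y_1)=Y$, but a morphism of pairs only guarantees $g(Y)\subseteq Y_1$. The closed immersions supplied by Lemma \ref{Lem:ImmersionHomotopyFactorisation} do not satisfy this: there $g(x)=(e(x),f(x))$ with $\widetilde Y_1=\IA^N\times Y_1$, so $g^{-1}(\widetilde Y_1)=f^{-1}(Y_1)$, which is strictly larger than $Y$ whenever $f$ maps points of $X\smallsetminus Y$ into $Y_1$ (take $f$ constant with value in $Y_1$). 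In that case the induced map of pointed quotients collapses additional points, is not injective, and so is not a cofibration; neither leg of your strict pushout need be a cofibration, the pushout need not be a homotopy pushout, and left properness does not apply. (The remark after \eqref{Eqn:AnalytificationAndCohomology} that you cite concerns the inclusion $Y(\IC)\hookrightarrow X(\IC)$, not the induced map of quotients $X(\IC)/Y(\IC)\to X_1(\IC)/Y_1(\IC)$; these are different claims.) The paper sidesteps this exactly where you run into trouble: instead of pushing out the two closed immersions directly, it models the double mapping cylinder, taking the ordinary pushout of the span in \eqref{Eqn:HomotopyPushAlgebraic2} whose upper leg is $(i_0,i_1)\colon (X,Y)\vee(X,Y)\to(X\times\IA^1,Y\times\IA^1)$. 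This leg is clean---the preimage of $Y\times\IA^1$ under $(i_0,i_1)$ is exactly $Y\sqcup Y$---so it does become a genuine cofibration under $(-)^\top$, which is what makes the resulting strict pushout compute a homotopy pushout. Your argument could be repaired by replacing Lemma \ref{Lem:ImmersionHomotopyFactorisation} with a mapping-cylinder factorisation whose closed immersion is clean in the above sense, but once you do that you have essentially reproduced the paper's construction.
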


\begin{proof}
\begin{par}
We will construct the homotopy pushout of $f$ and $s$. Let us first recall how this is done classically in algebraic topology, see \cite[\S4.2]{Die08}: Given a diagram of pointed CW-complexes and continuous maps
\begin{equation}\label{Eqn:HomotopyPushTopological}
\begin{diagram}
\node{X}\arrow{s,l}{f}\arrow{e,t}{s}\node{\widetilde X}\\
\node{X_1}
\end{diagram}
\end{equation}
the \emph{homotopy pushout} is the space $\widetilde X_1$ obtained by quotienting $X_1 \sqcup (X \wedge [0,1]) \sqcup \widetilde X$ by the relations $(x,0)\sim f(x)$ and $(x,1)\sim s(x)$ for $x\in X$. If $f$ and $s$ are both cofibrations the homotopy pushout is an ordinary pushout, but not in general: the homotopy pushout remembers the space of relations $X$. For example, if $X_1$ and $X_2$ are reduced to a point, then the homotopy pushout is the suspension of $X$. If the morphism $s:X\to \widetilde X$ in \eqref{Eqn:HomotopyPushTopological} is a homotopy equivalence, then so is the canonical morphism $\widetilde s: X_1 \to \widetilde X_1$.
\end{par}
\begin{par}
We now model the homotopy pushout with pairs of algebraic varieties. By Lemma \ref{Lem:ImmersionHomotopyFactorisation} we can and will suppose without loss of generality that the morphisms $s:X\to \widetilde X$ and $f\colon X\to X_1$ in \eqref{Eqn:HomotopyPushAlgebraic} are closed immersions. Define the pair $(\widetilde X_1, \widetilde Y_1)$ to be the ordinary pushout of the diagram
\begin{equation}\label{Eqn:HomotopyPushAlgebraic2}
\begin{diagram}
\node{(X,Y) \vee (X,Y)}\arrow{s,l}{f\vee s}\arrow{e,t}{(i_0, i_1)}\node{(X \times \IA^1, Y\times \IA^1)}\arrow{s}\\
\node{(X_1,Y_1)\vee(\widetilde X, \widetilde Y) }\arrow{e,t}{u}\node{(\widetilde X_1, \widetilde Y_1)}
\end{diagram}
\end{equation}
where $(i_0,i_1)$ is the map sending one copy of $X$ to $X\times\{0\}$ and the other to $X\times\{1\}$. Since both, $(i_0,i_1)$ and $f\vee s$ are closed immersions, this pushout exists as we have seen in Lemma \ref{Lem:PushoutOfClosedImmersions}. We define $\widetilde s$ and to be the restriction of the map $u$ to the component $(X_1,Y_1)$, and $\widetilde f$ to be the restriction of $u$ to $(\widetilde X, \widetilde Y)$. In other terms, $(\widetilde X_1, \widetilde Y_1)$ is the quotient space
$$(\widetilde X_1, \widetilde Y_1) = \big((X_1,Y_1) \sqcup (X\times\IA^1, Y\times \IA^1) \sqcup (\widetilde X, \widetilde Y))/_{(x,0)= s(x), (x,1)\sim f(x)}
$$
and the morphisms $\widetilde s$ and $\widetilde f$ are the ones induced by inclusions. The functor $(-)^\top: \bHo(k) \to \bHo$ sends the diagram \eqref{Eqn:HomotopyPushAlgebraic} to a homotopy pushout. In particular, the morphism $\widetilde s$ is a homotopy equivalence since $s$ is so by hypothesis.
\end{par}
\end{proof}

\vspace{4mm}
\begin{cor}
The class of homotopy equivalences in $\bAff_2(k)/_\simeq$ admits a calculus of left fractions. In the terminology of \cite[III, \S2, Def. 6]{GM96}, the class of homotopy equivalences is \emph{localising}.
\end{cor}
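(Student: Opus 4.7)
The plan is to verify the three axioms defining a localising class of morphisms in \cite[III, \S2, Def.~6]{GM96} — closure under identities and composition, the left Ore condition, and a cancellation property — applied to the class of homotopy equivalences in $\bAff_2(k)_\simeq$.

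The first axiom is immediate: the functor $(-)^\top$ preserves identities and composition, and identities as well as composites of homotopy equivalences in $\bHo$ are again homotopy equivalences, so the same holds for their algebraic counterparts. The left Ore condition is nothing other than Proposition \ref{Pro:LeftOreCondition}, whose proof has just been established via the algebraic homotopy-pushout construction, so no additional work is required there.

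For the cancellation axiom I would invoke the observation recorded immediately before Proposition \ref{Pro:LeftOreCondition}: given a homotopy equivalence $s\colon (\widetilde X, \widetilde Y) \to (X, Y)$ and morphisms $f, g\colon (X, Y) \to (X_1, Y_1)$ with $fs \simeq gs$, choosing any topological homotopy inverse $r$ of $s^\top$ — which need not come from an algebraic morphism — yields $f^\top \simeq f^\top s^\top r \simeq g^\top s^\top r \simeq g^\top$, so $f$ and $g$ already represent the same morphism in $\bAff_2(k)_\simeq$. This is in fact stronger than what the axiom demands, since only the existence of a further element $u$ of the class with $uf = ug$ is required. At this point the proof is complete, and there is no remaining obstacle: all the non-trivial content was absorbed into Proposition \ref{Pro:LeftOreCondition} and, through its proof, into Lemmas \ref{Lem:ImmersionHomotopyFactorisation} and \ref{Lem:PushoutOfClosedImmersions}, which together underpin the algebraic model of the homotopy pushout.
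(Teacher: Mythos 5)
Your proof is correct and follows the same path as the paper: the paper's proof likewise notes that all conditions of the definition are trivially satisfied except the (left) Ore condition, which is precisely Proposition \ref{Pro:LeftOreCondition}. Your explicit verification of the cancellation axiom using a topological homotopy inverse of $s^\top$ is just an unpacking of the paper's remark, stated in the paragraph preceding Proposition \ref{Pro:LeftOreCondition}, that homotopy equivalences admit left cancellation.
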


\begin{proof}
All conditions of \cite[III, \S2, Def. 6]{GM96} are trivially satisfied by homotopy equivalences in $\bAff_2(k)/_\simeq$, except condition (b) which is the content of Proposition \ref{Pro:LeftOreCondition}.
\end{proof}

%%%%%%%%%%%%%%%%%%%%%%%%%%%%%%%%%%%%%
\vspace{14mm}%%%%%%%%%%%%%%%%%%%%%%%%
\section{Algebraic cogroups}%%%%%%%%%
%%%%%%%%%%%%%%%%%%%%%%%%%%%%%%%%%%%%%

\begin{par}
We now have all ingredients at our disposal in order to define algebraic cogroups, which is what we will do in this section. All varieties and morphisms of varieties are tacitly supposed to be defined over a fixed field $k\subseteq \IC$. We abbreviate $\ast = \spec(k)$.
\end{par}

\vspace{4mm}
\begin{defn}
An \emph{algebraic cogroup} consists of a pair of affine algebraic varieties $(X,Y)$ together with two morphisms in $\bHo(k)$
$$c\colon (X,Y)\to (X,Y)\vee (X,Y) \qquad i\colon (X,Y)\to (X,Y)$$
such that the diagrams analogous to \eqref{Eqn:CogroupAxiomDiagrams} commute in $\bHo(k)$.
\end{defn}

\vspace{4mm}
\begin{para}
Let $(X,Y)$ be an algebraic cogroup. For every pair $(X_1,Y_1)$, the set
$$\Hom_{\bHo(k)}((X,Y), (X_1,Y_1))$$
is equipped with a canonical group structure, which is functorial in $(X_1,Y_1)$ for morphisms of pairs, and functorial in $(X,Y)$ for morphisms of cogroups.  The sum of two morphisms $f$ and $g$ from $(X,Y)$ to $(X_1,Y_1)$ is defined as in \eqref{Eqn:DefOfSumOfMorphisms}, hence is compatible with the functor $(-)^\top: \bHo(k) \to \bHo$. In particular we have 
$$(f+g)^\top = f^\top + g^\top\qquad \mbox{and}\qquad (-f)^\top = -(f^\top)$$
and the relation with cohomology \eqref{Eqn:CogroupAndCohomologyOperation} holds verbatim for algebraic cogroups.
\end{para}

\vspace{4mm}
\begin{para}
The standard circle $S^1(k) = (\IA^1, \{0,1\})$ is a cogroup. The comultiplication or pinch map can be given as
$$c\colon (\IA^1, \{0,1\}) \xrightarrow{\:\:=\:\:} (\IA^1, \{0,\tfrac 12, 1\})\xleftarrow{\:\:\simeq\:\:} (\IA^1, \{0,1\}) \vee (\IA^1, \{0,1\})$$
where the map labelled with $=$ is the morphism of pairs given by the identity of $\IA^1$, and the map $\simeq$ is the homotopy equivalence given by $t \mapsto \tfrac 12t$ on the first component, and by $t \mapsto \tfrac 12(1+t)$ on the second component. The inversion morphism is the morphism of pairs
$$i\colon (\IA^1, \{0,1\}) \to (\IA^1, \{0,1\})$$
sending $t$ to $1-t$. The functor $(-)^\top$ sends $S^1(k)$ to the  circle $S^1$ with its usual cogroup structure. 
\end{para}

\vspace{4mm}
\begin{prop}
Let $(X, Y)$ be a pair of affine algebraic varieties. The suspension $\Sigma(X, Y)$ admits the structure of an algebraic cogroup. 
\end{prop}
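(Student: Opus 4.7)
The plan is to transfer the cogroup structure from the standard algebraic circle $S^1(k) = (\IA^1, \{0,1\})$ to the suspension $\Sigma(X,Y) = (X,Y) \wedge S^1(k)$ by functoriality of the smash product. The key point is that smashing with a fixed pair of affine varieties is a functor $\bHo(k) \to \bHo(k)$ that sends wedges to wedges, so it will transport the cogroup data and axioms already established for $S^1(k)$ to cogroup data and axioms for $\Sigma(X,Y)$.

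First, I would verify that the assignment $(X_1, Y_1) \mapsto (X,Y) \wedge (X_1, Y_1)$, extended to morphisms by $f \mapsto \mathrm{id}_{(X,Y)} \wedge f$, defines a functor $\bHo(k) \to \bHo(k)$. The essential input is the natural identification of $((X,Y) \wedge (X_1,Y_1))^\top$ with the topological smash product $(X(\IC)/Y(\IC)) \wedge (X_1(\IC)/Y_1(\IC))$: a homotopy on the topological side between $f^\top$ and $g^\top$ extends to a homotopy between $(\mathrm{id} \wedge f)^\top$ and $(\mathrm{id} \wedge g)^\top$, so the functor is well-defined at the level of $\bAff_2(k)_\simeq$. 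The standard fact that smashing with a fixed pointed CW-complex preserves homotopy equivalences then ensures that the functor descends to the localisation $\bHo(k)$.

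Next, I would record the distributivity identity
$$(X,Y) \wedge \big((X_1, Y_1) \vee (X_2, Y_2)\big) = \big((X,Y) \wedge (X_1, Y_1)\big) \vee \big((X,Y) \wedge (X_2, Y_2)\big),$$
which holds on the nose at the level of pairs of varieties and is natural in all arguments. With this and the functoriality above in hand, I would set $c_\Sigma = \mathrm{id}_{(X,Y)} \wedge c$ and $i_\Sigma = \mathrm{id}_{(X,Y)} \wedge i$, where $c$ and $i$ are the comultiplication and inversion of $S^1(k)$ already described in the paper. The cogroup axioms for $\Sigma(X,Y)$ then follow formally from those for $S^1(k)$, since every diagram in the definition of a cogroup is built out of comultiplications, inversions, fold maps, and counits, all of which commute with the functor $(X,Y) \wedge -$.

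The step that requires a little care is the bookkeeping with roofs. The comultiplication $c$ on $S^1(k)$ is a morphism in $\bHo(k)$ presented as a roof through the intermediate pair $(\IA^1, \{0, 1/2, 1\})$, so $c_\Sigma$ is presented as the corresponding roof through $(X \times \IA^1, (X \times \{0, 1/2, 1\}) \cup (Y \times \IA^1))$. Verifying that the coassociativity, counit and inverse diagrams commute in $\bHo(k)$ then reduces to composing such roofs and invoking the calculus of left fractions established in the previous section; no geometric input beyond the rescaling homotopy equivalences of the unit interval is required, which is why I expect the argument to be mostly formal.
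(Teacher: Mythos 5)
Your proposal is essentially identical to the paper's proof: both transfer the cogroup structure from $S^1(k)$ to the suspension $\Sigma(X,Y) = S^1(k) \wedge (X,Y)$ via functoriality of the smash product and the on-the-nose distributivity of $\wedge$ over $\vee$. Your extra remarks on roofs and the calculus of fractions just make explicit bookkeeping that the paper leaves implicit by reference to the topological case.
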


\begin{proof}
The suspension of $(X,Y)$ is the wedge $S^1(k)\wedge (X,Y)$, and the smash and wedge product satisfy the distributivity property
$$((S^1(k) \vee S^1(k)) \wedge (X,Y) = (S^1(k) \wedge (X,Y)) \vee (S^1(k) \wedge (X,Y))$$
so that a comultiplication on $\Sigma(X,Y)$ is induced by the comultiplication on $S^1(k)$ just as in topology, as explained in \ref{Par:S1TopologicalCogroup}.
\end{proof}

\vspace{4mm}
\begin{ex}\label{Exa:WedgeOfCirclesCogroup}
Let $\Lambda$ be a finitely generated free $\IZ$-module and let $\alpha\colon \Lambda\to \Lambda$ be a $\IZ$-linear endomorphism. We can construct a pair of varieties $(X,Y)$, an endomorphism $f\colon (X,Y)\to(X,Y)$ in $\bHo(k)$, and an isomorphism $\varphi\colon(\Lambda\otimes\IQ) \to H^1(X,Y)$ such that the diagram
\begin{equation}\label{Eqn:DiagramLinearToCohomological}
\begin{diagram}
\node{\Lambda\otimes\IQ}\arrow{s,l}{\varphi}\arrow{e,t}{\alpha\otimes 1}\node{\Lambda\otimes\IQ}\arrow{s,l}{\varphi}\\
\node{H^1(X,Y)}\arrow{e,t}{f^\ast}\node{H^1(X,Y)}
\end{diagram}
\end{equation}
commutes.  Let us choose a $\IZ$-basis $\lambda_1, \ldots, \lambda_n$ of $\Lambda$, and consider the pair $(X_0, Y_0)$ where $X_0$ is a disjoint union of $n+1$ points $x_0, \ldots, x_n$ and $Y$ consists of the single point $x_0$. For integers $1\leq i,j \leq n$ we can consider the endomorphism $e_{ij}:(X_0,Y_0) \to (X_0,Y_0)$ sending $x_i$ to $x_j$ and sending all other points of $X_0$ to $x_0$. The induced map in cohomology $e_{ij}^\ast:H^0(X_0,Y_0)\to H^0(X_0,Y_0)$ corresponds via the obvious isomorphism $H^0(X_0,Y_0) \cong \Lambda\otimes\IQ$ to the linear map $\eps_{ij}$ sending $\lambda_i$ to $\lambda_j$ and sending all other basis vectors to zero. Taking for $(X,Y)$ the suspension of $(X_0,Y_0)$, we end up with an isomorphism 
$$\varphi\colon (\Lambda\otimes \IQ)\cong H^0(X_0,Y_0) \cong H^1(\Sigma(X_0,Y_0))  = H^1(X,Y)$$
and the maps $\Sigma e_{ij}\colon (X,Y)\to (X,Y)$ induce endomorphisms of $H^1(X,Y)$ which correspond via $\varphi$ to the linear maps $\eps_{ij}$. The given endomorphism $\alpha\colon \Lambda\to \Lambda$ is a $\IZ$-linear combination, say
$$\alpha = \sum_{1\leq i,j \leq n}a_{ij}\eps_{ij}$$
of the endomorphisms $\pi_{ij}$. We use the cogroup structure on  $(X,Y) = \Sigma(X_0,Y_0)$ to produce an endomorphism
$$f = \sum_{1\leq i,j \leq n}a_{ij}.(\Sigma e_{ij})$$
of $(X,Y)$ in the category $\bHo(k)$. The endomorphism $f$ depends on the order of summation, because the cogroup structure on $(X,Y)$ is not commutative. The map $f^\ast \colon H^1(X,Y)\to H^1(X,Y)$ induced by $f$ is independent of this choice, and makes the diagram \eqref{Eqn:DiagramLinearToCohomological} commute.
\end{ex}

\vspace{4mm}
\begin{para}
Nothing stands in the way of defining \emph{algebraic homotopy groups} of a pair $(X,Y)$ as
$$\pi_n(X,Y) = \Hom_{\bHo(k)}(S^n(k),(X,Y))$$
where $S^n(k)$ is the standard sphere $\Sigma^{n-1} S^1(k)$. These are groups, commutative for $n\geq 2$, and there are long exact Puppe sequences for triples. The functor $(-)^\top$ defines an injection
$$\pi_n(X,Y) \to \pi_n((X,Y)^\top)$$
which need not be an isomorphism. Indeed, we have $\pi_1(\IG_{m,k},\{1\})=0$ while $\pi_1(\IC^\times, 1) \cong \IZ$. We did not pursue this any further.
\end{para}

%%%%%%%%%%%%%%%%%%%%%%%%%%%%%%%%%%%%%%%%
\vspace{14mm}%%%%%%%%%%%%%%%%%%%%%%%%%%%
\section{Variants of Nori's quiver}%%%%%
%%%%%%%%%%%%%%%%%%%%%%%%%%%%%%%%%%%%%%%%

\begin{par}
We explain the basic mechanism of Nori's quiver representation, and present some easy variants. For the whole section, we fix a subfield $k$ of $\IC$. All varieties are understood to be varieties over $k$, and all vector spaces are understood to be finite-dimensional rational vector spaces. Given a vector space $V$, we set
$$V(-i) = V\otimes H^1(\IG_m, 1)^{\otimes i}$$
with the usual convention that, for negative integers $i$, the $i$-th tensor power means the $(-i)$-th tensor power of the linear dual.
\end{par}

\vspace{4mm}
\begin{para}
By a quiver, we understand a class of objects and a class of morphisms, together with maps that attach to each morphism its source and its target, and to each object an identity morphism. Nori's quiver is the quiver $\Q(k)$ whose objects are tuples
$$q = [X,Y,n,i]$$
consisting of a quasiprojective variety $X$, a closed subvariety $Y$, and two integers $n$ and $i$ that will be called the \emph{degree} and the \emph{twist}. There are three types of morphisms in $\Q(k)$. All morphisms with target $[X,Y,n,i]$ are:
\begin{enumerate}
    \item[(a)] for every morphism $(X,Y) \to (X',Y'),$ a morphism $[X',Y',n,i] \to [X,Y, n, i]$, 
    
    \smallskip
    
    \item[(b)] for every closed subvariety $Z \subseteq Y,$ a morphism $[Y,Z,n-1,i] \to [X,Y,n,i]$,
    
    \smallskip
    
    \item[(c)] one morphism $[X \times \IG_m, X\times\{1\}\cup Y \times\IG_m, n+1,i+1] \to [X,Y,n,i]$.
\end{enumerate}
We will refer to these morphisms as morphisms of type (a), (b) and (c). With Nori's quiver $\Q(k)$ comes a \emph{representation}
$$\rho: \Q(k) \to \bVec_\IQ$$
associating with an object $q = [X,Y,n,i]$ the finite-dimensional rational vector space
$$\rho(q) = \rho([X,Y,n,i]) = H^n(X,Y)(i)$$
and associating with morphisms of type (a) the usual induced morphism in cohomology, with morphisms of type (b) the connecting morphism in the long exact sequence of the triple $Z \subseteq Y \subseteq X$, and with morphisms of type (c) the K\"unneth isomorphism.  
\end{para}

\vspace{4mm}
\begin{para}
\begin{par}
The ring $\End(\rho)$ has the structure of a $\IQ$-proalgebra: It is the limit of finite-dimensional $\IQ$-algebras
$$\End(\rho) = \lim_{Q \subseteq \Q(k)}\End(\rho|_Q)$$
where the limit ranges over the finite subquivers $Q$ of $\Q(k)$, with restrictions as transition maps. Nori's category of mixed motives $\bM(k)$ is the category of finite-dimensional, continuous $\End(\rho)$-modules. It is abelian, $\IQ$-linear, with a forgetful functor to $\bVec_\IQ$, called the \emph{Betti realisation}. The algebra $E$ acts contiuously on $\rho(q) = H^n(X,Y)(i)$ for every object $q = [X,Y,n,i]$ of $\Q(k)$, so when we speak about the \emph{motive} $H^n(X,Y)(i)$ we mean the vector space $H^n(X,Y)(i)$ together with its canonical $\End(\rho)$-module structure. 
\end{par}
\begin{par}
Nori's category features two interesting properties which are not immediate from the definition: First, it constitutes a universal cohomology theory for all cohomology theories that can be compared with singular cohomology, and second, there is a canonical tensor structure on the category $\bM(k)$ making it a neutral tannakian category. The quiver $\Q(k)$ is not exactly the one Nori used originally: the objects in Nori's original quiver are only those of the form $[X,Y,n, 0]$, and morphisms of type (c) are not considered. The linear hull of this subquiver is the category of \emph{effective motives}, and to compensate for morphisms of type (c) Nori inverts the motive $H^1(\IG_m)$ with respect to a monoidal structure, which is not easy to construct. We learnt the idea to build in Tate twists from the start from Ayoub \cite[p. 6]{Ayo14}. 
\end{par}
\end{para}

\vspace{4mm}
\begin{para}\label{Par:Sum101IsElementary}
Morphisms in Nori's quiver induce morphisms of motives. In particular, homotopy equivalences induce isomorphisms of motives. The morphism of pairs \eqref{Eqn:HReplacement} and the connecting morphism in the long exact sequence of the triple \eqref{Eqn:ConeSuspensionTriple} yield an isomorphism 
\begin{equation}\label{Eqn:DegreeShift}
H^n(X,Y)(i) \cong H^{n+1}(\Sigma(X,Y))(i)
\end{equation}
and morphisms of type (c) yield isomorphisms
\begin{equation}\label{Eqn:TwistShift}
H^n(X,Y)(i) \cong H^{n+1}((\IG_m,\{1\})\wedge (X,Y))(i+1).
\end{equation}
We can now fulfill our promise made in the introduction and explain why a sum of motives
$$\bigoplus_\alpha H^{n_\alpha}(X_\alpha,Y_\alpha)(i_\alpha)
$$
is isomorphic to a motive $H^{n_0}(X_0,Y_0)(i_0)$. Indeed, by using \eqref{Eqn:DegreeShift} and \eqref{Eqn:TwistShift} repeatedly, we may assume that in the sum all the degrees $n_\alpha$ and all the twists $i_\alpha$ are equal. Once we have arranged this, it suffices to take for $(X_0,Y_0)$ the wedge product of the $(X_\alpha,Y_\alpha)$. 
\end{para}

\vspace{4mm}
\begin{para}
Let $\Qaff(k) \subseteq \Q(k)$ be the full subquiver of $\Q(k)$ whose objects are those tuples $[X,Y,n,i]$ where $X$ is affine and $Y$ is non-empty. Let $\Q_h(k)$ be the quiver whose objects are the same as those of $\Qaff(k)$ but whose morphisms of type (a) are given by morphisms in the category $\bHo(k)$ instead. We can restrict the quiver representation $\rho\colon \Q(k) \to \bVec_\IQ$ to a quiver representation $\rho_\mathrm{aff}$ of $\Qaff(k)$, which then factorises over a representation $\rho_\mathrm{h}$ of $\Qh(k)$. The situation is summarised in the following commutative diagram of quiver morphisms and representations: 
\begin{equation}\label{Eqn:EndQEndQaffEndQhDiagram}
\begin{diagram}
\node{\Q(k)}\arrow{se,b}{\rho}\node{\Qaff(k)}\arrow{s,r}{\rho_\mathrm{aff}}\arrow{w,t}{\supseteq}\arrow{e,t}{\mathrm{can.}}\node{\Qh(k)}\arrow{sw,b}{\rho_\mathrm{h}}\\
\node[2]{\bVec_\IQ}
\end{diagram}
\end{equation}
\end{para}

\vspace{4mm}
\begin{prop}\label{Eqn:QuiverVariants1}
The morphisms of proalgebras 
$\End(\rho) \xrightarrow{\quad} \End(\rho_\mathrm{aff}) \xleftarrow{\quad} \End(\rho_\mathrm{h})$
obtained from the commutative diagram \eqref{Eqn:EndQEndQaffEndQhDiagram} are isomorphisms.
\end{prop}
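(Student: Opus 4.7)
The plan is to prove the two isomorphisms separately; in both cases injectivity is the easy half, while surjectivity requires extending endomorphism data and checking coherence.

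For $\End(\rho)\to\End(\rho_{\mathrm{aff}})$, the geometric input is that every object $q=[X,Y,n,i]$ of $\Q(k)$ admits a morphism in $\Q(k)$ to an object of $\Qaff(k)$ whose $\rho$-image is an isomorphism. If $Y=\emptyset$, the pair morphism $(X,\emptyset)\to(X\sqcup\ast,\ast)$ gives a type-(a) quiver morphism $[X\sqcup\ast,\ast,n,i]\to[X,\emptyset,n,i]$ whose realisation $H^n(X\sqcup\ast,\ast)\to H^n(X)$ is an isomorphism. Once $Y\neq\emptyset$, Jouanolou's trick produces an affine bundle $\widetilde X\to X$ with $\widetilde X$ affine, and setting $\widetilde Y=\widetilde X\times_X Y$ yields a homotopy equivalence of affine pairs $(\widetilde X,\widetilde Y)\to(X,Y)$; the associated quiver morphism $[X,Y,n,i]\to[\widetilde X,\widetilde Y,n,i]$ lands in $\Qaff(k)$ and is sent by $\rho$ to an isomorphism. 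Injectivity of $\End(\rho)\to\End(\rho_{\mathrm{aff}})$ is then immediate: any $\eta$ vanishing on $\Qaff(k)$ must vanish on every $q$ by naturality on such an iso-morphism. For surjectivity, I would fix for each $q\in\Q(k)$ a choice $f_q\colon q\to\widetilde q\in\Qaff(k)$ as above and define the extension by transport, $\eta_q:=\rho(f_q)^{-1}\eta'_{\widetilde q}\rho(f_q)$.

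Two coherence checks remain: independence of the choice of $f_q$, and naturality on arbitrary morphisms of $\Q(k)$. Both rest on the same fibre-product construction. For two Jouanolou replacements $\widetilde X_1,\widetilde X_2\to X$ I would form $Z=\widetilde X_1\times_X\widetilde X_2$, which is affine because it is a closed subscheme of the affine $\widetilde X_1\times\widetilde X_2$, and which maps to each $\widetilde X_j$ as an affine bundle; naturality of $\eta'$ on the induced morphisms in $\Qaff(k)$ reconciles the two candidate transports. The same recipe dispatches naturality on a type-(a) morphism $g\colon q_1\to q_2$ of $\Q(k)$ coming from $(X_2,Y_2)\to(X_1,Y_1)$: the fibre product $Z=\widetilde X_1\times_{X_1}\widetilde X_2$ provides a common affine object $\widetilde q_{12}$ of $\Qaff(k)$ through which the diamond with vertices $q_1,q_2,\widetilde q_1,\widetilde q_2,\widetilde q_{12}$ commutes in $\Q(k)$, so that $\eta_{q_2}\rho(g)=\rho(g)\eta_{q_1}$ follows from naturality of $\eta'$ at $\widetilde q_{12}$. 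Type-(b) and type-(c) morphisms are handled by entirely analogous diamonds, using that Jouanolou replacement is compatible with passing to closed subvarieties and with products with $\IG_m$.

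For $\End(\rho_{\mathrm{h}})\to\End(\rho_{\mathrm{aff}})$ the argument is formal. Both quivers share objects and morphisms of types (b) and (c); only type-(a) morphisms differ. By the Corollary following Proposition~\ref{Pro:LeftOreCondition}, every type-(a) morphism of $\Qh(k)$ is represented by a roof $s^{-1}f$ with $s,f$ morphisms of affine pairs and $s$ a homotopy equivalence. Injectivity is trivial because the underlying data are the same collections of linear maps. For surjectivity, given $\eta_{\mathrm{aff}}\in\End(\rho_{\mathrm{aff}})$, the naturality on $s$ and $f$ separately combined with the identity $\rho_{\mathrm{h}}(s^{-1}f)=\rho(s)^{-1}\rho(f)$ and the invertibility of $\rho(s)$ forces naturality on $s^{-1}f$; since $\rho_{\mathrm{aff}}$ already identifies homotopic pair morphisms, the extension descends unambiguously to $\bHo(k)$-classes.

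The main obstacle I anticipate is the bookkeeping in the surjectivity argument for the first isomorphism, specifically fitting type-(b) connecting morphisms and type-(c) K\"unneth morphisms into a common affine refinement compatible with the chosen Jouanolou replacements. Everything else is a matter of unwinding definitions.
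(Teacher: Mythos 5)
Your proposal is correct and follows essentially the same strategy as the paper's proof: Jouanolou replacements establish injectivity and supply the transport maps for surjectivity of $\End(\rho)\to\End(\rho_{\mathrm{aff}})$, while the $\Qaff(k)$--$\Qh(k)$ comparison is formal once one notes that commuting with all pair morphisms forces commuting with all roofs $s^{-1}f$. You are in fact somewhat more careful than the paper, which merely asserts the transported endomorphisms form a well-defined natural family; your fiber-product device $\widetilde X_1\times_X\widetilde X_2$ is the right mechanism both for reconciling different choices of replacement and for producing the common affine refinements needed to test naturality on type (a), (b), (c) morphisms. One small slip: when $Y=\emptyset$ the object $[X\sqcup\ast,\ast,n,i]$ need not lie in $\Qaff(k)$ since $X$ is not assumed affine, so you should still apply Jouanolou afterwards (or, as the paper does, apply Jouanolou first and only then adjoin an isolated point if $\widetilde Y=f^{-1}(Y)$ comes out empty).
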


\begin{proof}
\begin{par}
Let $e$ be an endomorphism of $\rho$. Concretely, $e$ is a collection of linear endomorphisms $e_q \colon \rho(q)\to \rho(q)$, one for every object $q = [X,Y,n,i]$ of $\Q(k)$, whhich are compatible in that, for every morphism $f\colon p\to q$ in $\Q(k)$, the diagram of vector spaces
\begin{equation}\label{Eqn:QuiverVariants1BasicSquare}
\begin{diagram}
\node{\rho(p)}\arrow{e,t}{\rho(f)}\arrow{s,l}{e_p}\node{\rho(q)}\arrow{s,r}{e_q}\\
\node{\rho(p)}\arrow{e,t}{\rho(f)}\node{\rho(q)}
\end{diagram}
\end{equation}
commutes. In order to show that the morphism 
\begin{equation}\label{Eqn:QuiverToAffine}
    \End(\rho) \xrightarrow{\quad} \End(\rho_{\mathrm{aff}})
\end{equation}
is injective, we may assume that $e_q=0$ for all $q=[X,Y,n,i]$ where $X$ is affine, and have to show that $e$ is zero. This is an immediate consequence of Jouanolou's trick: given an arbitrary object $[X,Y,n,i]$ of $\Q(k)$, there exists an affine variety $\widetilde X$ and a homotopy equivalence $f\colon  \widetilde X \to X$. Setting $\widetilde Y = f^{-1}(Y)$ and $\widetilde q = [\widetilde X, \widetilde Y,n,i]$, the morphism $f\colon \widetilde q \to q$ of type (a) induces an isomorphism of vector spaces $\rho(f)\colon \rho(\widetilde q) \to \rho(q)$. In case $\widetilde Y$ is empty, add to $\widetilde X$ and isolated point $\ast$ and set $\widetilde Y = \ast$. The diagram \eqref{Eqn:QuiverVariants1BasicSquare} for this particular morphism $f$, and hence $e_q$ is indeed zero. Let us for now refer to $\widetilde q \to q$ as an \emph{affine homotopy replacement}. In order to show that the morphism of proalgebras \eqref{Eqn:QuiverToAffine} is surjective as well, consider an element $e$ of $\End(\rho_{\mathrm{aff}})$. Choosing arbitrary affine homotopy replacements as before, we can extend $e$ to a well defined collection of endomorphisms $e_q$ for all objects $q$ of $\Q(k)$. We must check that the square \eqref{Eqn:QuiverVariants1BasicSquare} commutes, knowing that such squares commute for morphisms in $\Qaff(k)$. This can be checked by observing that for every morphism $f\colon p\to q$ in $\Q(k)$, we can choose affine homotopy replacements $\widetilde p \to p$ and $\widetilde q \to q$ such that there is a morphism $\widetilde f\colon  \widetilde p\to \widetilde q$ in $\Qaff(b)$ for which the diagram
$$\begin{diagram}
\node{\rho(\widetilde p)}\arrow{e,t}{\rho(\widetilde f)}\arrow{s,l}{\cong}\node{\rho(\widetilde q)}\arrow{s,r}{\cong}\\
\node{\rho(p)}\arrow{e,t}{\rho(f)}\node{\rho(q)}
\end{diagram}$$
commutes. Attaching this square to \eqref{Eqn:QuiverVariants1BasicSquare} yields a cube where all faces other than \eqref{Eqn:QuiverVariants1BasicSquare} commute, so  \eqref{Eqn:QuiverVariants1BasicSquare} commutes as well. Thus, the collection $(e_q)_{q\in \Q(k)}$ is an element of $\End(\rho)$ as we wanted to show. 
\end{par}

\begin{par}
The isomorphism $\End(\rho_\mathrm{aff})\cong  \End(\rho_\mathrm{h})$ is a formality: since the quivers $\Qaff(k)$ and $\Qh(k)$ have the same objects, we just need to check that a collection of endomorphisms $e_q$ commutes with morphisms in $\Qaff(k)$ if and only if it commutes with morphisms in $\Qh(k)$, but that is obvious.
\end{par}
\end{proof}

%%%%%%%%%%%%%%%%%%%%%%%%%%%%%%%%%%%%%%%%%%%%%%%%%%%%%%%%%%
\vspace{14mm}%%%%%%%%%%%%%%%%%%%%%%%%%%%%%%%%%%%%%%%%%%%%%
\section{The subquotient question for Nori's motives}%%%%%
%%%%%%%%%%%%%%%%%%%%%%%%%%%%%%%%%%%%%%%%%%%%%%%%%%%%%%%%%%

\begin{par}
In this section, we prove the main theorem from the introduction. The technical difficulty that remains to surmount is to show that each motive over $k$ is a module over the endomorphism ring of a finite subquiver of $\Qh(k)$ containing only morphisms of type (a). Proposition \ref{Pro:SingleObjectQuiverFactorisation} states that this is indeed possible; in fact, we can write $M$ as a module over the endomorphism ring of a subquiver of $\Qh(k)$ with only one object and possibly many endomorphisms, necessarily of type (a). We fix for the whole section a field $k\subseteq \IC$ and convene that all quivers are subquivers of $\Qh(k)$ equipped with the restriction of the standard representation $\rho = \rho_{\mathrm h}\colon \Qh(k) \to \bVec_\IQ$. 
\end{par}

\vspace{4mm}
\begin{defn}
Let $Q_0$ and $Q_1$ be finite subquivers of $\Qh(k)$. We say that $Q_0$ and $Q_1$ are \emph{equivalent} if there exists a finite subquiver $Q^+$ of $\Qh(k)$ containing $Q_0$ and $Q_1$ and an isomorphism of $\End(\rho|_{Q^+})$ algebras
$$\End(\rho|_{Q_0}) \xrightarrow{\:\:\cong\:\:} \End(\rho|_{Q_1}).$$
\end{defn}

\vspace{4mm}
\begin{prop}\label{Pro:SingleObjectQuiverFactorisation}
Every finite subquiver of $\Qh(k)$ is equivalent to a quiver containing only one object.
\end{prop}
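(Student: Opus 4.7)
The plan is to construct $Q_1$ in two stages: first align the bi-degrees $(n_\alpha,i_\alpha)$ of all objects of $Q_0=\{q_\alpha=[X_\alpha,Y_\alpha,n_\alpha,i_\alpha]\}$ to a common pair $(N,I)$, and then fuse the aligned objects into a single one via the wedge. Fix integers $I\geq\max_\alpha i_\alpha$ and $N\geq I+\max_\alpha(n_\alpha-i_\alpha)$. For each $\alpha$, iterated application of \eqref{Eqn:DegreeShift} and \eqref{Eqn:TwistShift} produces an object
$$q'_\alpha:=[X'_\alpha,Y'_\alpha,N,I]\quad\text{with}\quad(X'_\alpha,Y'_\alpha):=\Sigma^{N-n_\alpha-I+i_\alpha}\bigl((X_\alpha,Y_\alpha)\wedge(\IG_m,1)^{\wedge(I-i_\alpha)}\bigr),$$
together with a zigzag $\zeta_\alpha$ in $\Qh(k)$ joining $q'_\alpha$ to $q_\alpha$ whose edges are morphisms of type (a), (b), or (c) with $\rho$-image an isomorphism. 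By the functoriality of suspension and smash product --- combined, for a morphism of type (b) in $Q_0$, with the identification in \ref{Par:Puppe} of the connecting morphism with a genuine arrow $(X,Y)\to\Sigma(Y,Z)$ in $\bHo(k)$, and similarly for type (c) --- one produces for every morphism $\phi\colon q_\alpha\to q_\beta$ of $Q_0$ a type (a) morphism $\phi'\colon q'_\alpha\to q'_\beta$ in $\Qh(k)$ such that $\rho(\phi')$ corresponds to $\rho(\phi)$ under the zigzag identifications $\rho(q'_\alpha)\cong\rho(q_\alpha)$ and $\rho(q'_\beta)\cong\rho(q_\beta)$. Let $Q^{(1)}$ be the finite subquiver of $\Qh(k)$ obtained by adjoining all these data to $Q_0$. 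Since every new edge is a $\rho$-isomorphism, transport along the zigzags shows that the restriction $\End(\rho|_{Q^{(1)}})\to\End(\rho|_{Q_0})$ is an isomorphism.

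All $q'_\alpha$ now share the bi-degree $(N,I)$, so we form $(\widetilde X,\widetilde Y):=\bigvee_\alpha(X'_\alpha,Y'_\alpha)$ and $q:=[\widetilde X,\widetilde Y,N,I]$. Because the wedge of pairs is just disjoint union and each $Y'_\alpha$ is non-empty, there are honest morphisms of pairs $\iota_\alpha\colon(X'_\alpha,Y'_\alpha)\hookrightarrow(\widetilde X,\widetilde Y)$ (the inclusion) and $\pi_\alpha\colon(\widetilde X,\widetilde Y)\to(X'_\alpha,Y'_\alpha)$ (identity on the $\alpha$-summand, constant to a chosen point of $Y'_\alpha$ elsewhere) satisfying $\pi_\alpha\iota_\alpha=\id$. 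They induce type (a) quiver arrows $\iota_\alpha^\sharp\colon q\to q'_\alpha$ and $\pi_\alpha^\sharp\colon q'_\alpha\to q$ whose $\rho$-images realise the canonical decomposition $\rho(q)=\bigoplus_\alpha\rho(q'_\alpha)$. The loops $P_\alpha:=\pi_\alpha^\sharp\iota_\alpha^\sharp$ become pairwise orthogonal idempotents summing to $\id_{\rho(q)}$, and for every type (a) morphism $\phi'\colon q'_\alpha\to q'_\beta$ of $Q^{(1)}$, the composition $\phi'_q:=\pi_\beta^\sharp\,\phi'\,\iota_\alpha^\sharp$ yields a loop on $q$ whose $\rho$-image equals $\rho(\phi')$ placed in block $(\beta,\alpha)$ and zero elsewhere. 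Define $Q_1\subseteq\Qh(k)$ to be the single-object subquiver with object $q$ and loops $\{P_\alpha\}\cup\{\phi'_q\}$, and set $Q^+:=Q^{(1)}\cup Q_1\cup\{\iota_\alpha^\sharp,\pi_\alpha^\sharp\}$.

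To conclude, it suffices to see that both restriction maps $\End(\rho|_{Q^+})\to\End(\rho|_{Q_0})$ and $\End(\rho|_{Q^+})\to\End(\rho|_{Q_1})$ are isomorphisms, whence the required isomorphism of $\End(\rho|_{Q^+})$-algebras. An endomorphism family $(e_r)_{r\in Q^+}$ is entirely determined by $e_q$: commutation with the idempotents $\rho(P_\alpha)$ forces a block decomposition $e_q=\bigoplus_\alpha e_{q'_\alpha}$ with $e_{q'_\alpha}=\rho(\iota_\alpha^\sharp)\,e_q\,\rho(\pi_\alpha^\sharp)$, transport along $\zeta_\alpha$ recovers each $e_{q_\alpha}$, and conversely any compatible family on $Q_0$ prolongs in this manner. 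Compatibility with the off-diagonal loops $\phi'_q$ is equivalent, via the block description, to compatibility of $(e_{q'_\alpha})$ with the $\phi'$'s, hence to the $Q_0$-compatibility of $(e_{q_\alpha})$. The main obstacle, I expect, is the alignment step: giving a rigorous account of the zigzags $\zeta_\alpha$ and of the type (a) representatives $\phi'$ attached to morphisms of types (b) and (c) in $Q_0$, together with the verification that the relevant squares commute on $\rho$; once this is in place, the wedge construction and the diagram chase of the final step are purely formal.
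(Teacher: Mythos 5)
Your plan coincides with the paper's own proof in structure and substance: the paper splits the statement into exactly the two stages you describe, proving first (Lemma \ref{Lem:EliminatingTypesbc}) that every finite subquiver is equivalent to one in which all objects share a common degree and twist, and then (Lemma \ref{Lem:ReductionToSingleObject}) fusing those aligned objects into a single wedge object with inclusion/retraction loops --- precisely your $\iota_\alpha^\sharp$, $\pi_\alpha^\sharp$, $P_\alpha$, and $\phi'_q$. The formal engine behind your ``transport along zigzags'' argument is also made explicit in the paper as a general ``cloning process'' (\ref{Par:CloningProcess}, Lemma \ref{Lem:DeletingClonesFromQuiverRep}).

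The one organizational difference is in the alignment stage: you propose to shift each $q_\alpha$ to its target bi-degree $(N,I)$ in a single step via $\Sigma^{N-n_\alpha-I+i_\alpha}\bigl((X_\alpha,Y_\alpha)\wedge(\IG_m,1)^{\wedge(I-i_\alpha)}\bigr)$, whereas the paper proceeds by a double induction, first raising the smallest twist one unit at a time (converting type~(c) arrows), then raising the smallest degree one unit at a time (converting type~(b) arrows via the roof $\Sigma q \leftarrow Hq \to q$ of \ref{Par:Puppe} and the Puppe connector \eqref{Eqn:PuppeConnectorRoof}--\eqref{Eqn:PuppeConnectorInCohomology}). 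The induction is a convenience that lets one replace only the currently ``bad'' objects in each pass, so that in each step one has to rewrite only one flavour of morphism. You correctly identify this replacement as the genuine content of the proof; the detailed verification --- that every type (b) and type (c) edge between a shifted and an unshifted object has a type (a) substitute with the right $\rho$-image, and that the resulting squares commute --- is exactly what the two claims in the proof of Lemma \ref{Lem:EliminatingTypesbc} supply. One small imprecision to fix: not every new edge in $Q^{(1)}$ is a $\rho$-isomorphism (the morphisms $\phi'$ are not); the transport argument uses only the zigzag edges $\zeta_\alpha$ for the bijection $\End(\rho|_{Q^{(1)}})\cong\End(\rho|_{Q_0})$, and compatibility with the $\phi'$ is then checked via the commuting squares, as in Lemma \ref{Lem:DeletingClonesFromQuiverRep}. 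Likewise, since $Y'_\alpha$ need not have a $k$-point, the retraction $\pi_\alpha$ should be taken as the zero morphism in $\bHo(k)$ on the complementary components, as in the paper, rather than constant to a chosen point.
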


\begin{proof}
This follows from Lemma \ref{Lem:EliminatingTypesbc} and Lemma \ref{Lem:ReductionToSingleObject}.
\end{proof}

\vspace{4mm}
\begin{para}\label{Par:SingleObjectQuiverFactorisation}
An alternative way of defining equivalence of quivers would be to say that $Q_0$ and $Q_1$ are equivalent if the kernels of the restriction maps
$$\End(\rho) \xrightarrow{\:\:\mathrm{res}\:\:} \End(\rho|_{Q_0})\qquad \End(\rho) \xrightarrow{\:\:\mathrm{res}\:\:} \End(\rho|_{Q_1})$$
are equal. Here is how we will use Proposition \ref{Pro:SingleObjectQuiverFactorisation}: given a motive $M$, that is, a finite-dimensional continuous $\End(\rho)$-module, we may by definition of continuity find some finite subquiver $Q$ of $\Qh(k)$ such that the structural map $\End(\rho) \to \End_\IQ(M)$ factors through the finite-dimensional algebra $E = \End(\rho|_Q)$. By Proposition \ref{Pro:SingleObjectQuiverFactorisation}, we may arrange $Q$ to contain only one object, say $q = [X,Y,n,i]$. As an $E$-module, $M$ is a quotient of $E^n$ for some $n\geq 0$, so in order to prove that $M$ is a quotient of a motive of the form $H^{n_0}(X_0,Y_0)(i_0)$ it suffices to show that $E$ is. As an $E$-module, $E$ is a submodule of $$\End(\rho(q)) \cong H_n(X,Y)(-i) \otimes H^n(X,Y)(i)$$
where in the tensor product $E$ acts via $e(u\otimes v) = u\otimes ev$. The commutator of an endomorphism $f$ of $q$ in $\End(\rho(q))$ is the kernel of the map $f_\ast \otimes\id - \id\otimes f^\ast$, and we will then use cogroup structures on suspensions to show that the intersection of finitely many such kernels has the desired form.
\end{para}

\vspace{4mm}
\begin{para}\label{Par:CloningProcess}
\begin{par}
Let $\rho|_Q: Q \to \bVec_{\IQ}$ be the standard representation on some subquiver $Q$ of $\Qh(k)$, and suppose that we dislike certain objects in $Q$, and want to replace them with more likeable objects, without changing the endomorphism algebra $\End(\rho|_Q)$. In other words, we wish to find 
a quiver $Q_0$ which is equivalent to $Q$ and contains only likeable objects. That may be possible in theory, as follows. Write $Q_\bad$ for the full subquiver of bad objects and $Q_\gud$ for the full subquiver of good objects of $Q$, and let us enlarge $Q$ to a quiver $Q^+$ in three steps. 
\end{par}
\begin{par}\underline{Step 1}: Start with setting $Q^+ = Q$. Then, find for each bad object $q$ of $Q_\mathrm{b}$ a finite, connected quiver $L(q)$ containing $q$ and also containing a non-empty connected subquiver $L_\gud(q)$ consisting of good objects, such that the diagram of vector spaces $\rho(L(q))$ is a commutative diagram of isomorphisms. For an object $q'$ in $L(q)$, denote by $\lambda(q')$ the isomorphism $\rho(q) \to \rho(q')$ appearing in $\rho(L(q))$. We add to $Q^+$ these quivers $L(q)$. We understand here that we have made sure that the only object common to $L(q)$ and $Q$ is $q$, and that for different objects $p$ and $q$ in $Q_\bad$, the quivers $L(p)$ and $L(q)$ are disjoint.
\end{par}
\begin{par}\underline{Step 2}:
Next, for every morphism $f\colon  p\to q$ in $Q_\bad$, find and add to $Q^+$ a morphism $f':p'\to q'$, where $p'$ and $q'$ are objects in $L_\gud(p)$ and $L_\gud(q)$, such that the diagram
\begin{equation}\label{Eqn:DgeCloneStep2}
\begin{diagram}
\node{\rho(p)}\arrow{s,l}{\lambda(p')}\arrow{e,t}{\rho(f)}\node{\rho(q)}\arrow{s,r}{\lambda(q')}\\
\node{\rho(p')}\arrow{e,t}{\rho(f')}\node{\rho(q')}
\end{diagram}    
\end{equation}
commutes.
\end{par}
\begin{par}\underline{Step 3}:
Finally, for every morphism $f\colon  p\to q$ or $f\colon q\to p$ between an object $q$ of $Q_\bad$ and an object $p$ of $Q_\gud$, find and add to $Q^+$ a morphism $f': p\to q'$ or $f':q'\to p$, where $q'$ is an object in $L_\gud(q)$, such that the corresponding diagram
\begin{equation}\label{Eqn:DgeCloneStep3}
\begin{diagram}
\node{\rho(p)}\arrow{se,b}{\rho(f')}\arrow{e,t}{\rho(f)}\node{\rho(q)}\arrow{s,r}{\lambda(q')}\node[2]{\rho(q)}\arrow{e,t}{\rho(f)}\arrow{s,l}{\lambda(q')}\node{\rho(p)}\\
\node[2]{\rho(q')}\node[2]{\rho(q')}\arrow{ne,b}{\rho(f')}
\end{diagram}
\end{equation}
commutes.
\end{par}
\begin{par} 
Denote now by $Q_\gud^+\subseteq Q^+$ the full subquiver of good objects, obtained from $Q^+$ by deleting all bad objects and all morphisms to and from bad objects. It is straightforward to check, as we will in \ref{Lem:DeletingClonesFromQuiverRep}, that the restriction morphisms
\begin{equation}\label{Eqn:CloneRestrictionIsomorphisms}
\End(\rho|_{Q^+_\gud}) \leftarrow \End(\rho|_{Q^+}) \to \End(\rho|_Q)    
\end{equation}
are isomorphisms of algebras. In particular, the quivers $Q^+_\gud$ and $Q$ are equivalent, and the quiver we were looking for at the beginning of this discussion is $Q_0=Q_\gud^+$.
\end{par}
\end{para}

\vspace{4mm}
\begin{lem}\label{Lem:DeletingClonesFromQuiverRep}
The quiver $Q_0$, as constructed in \ref{Par:CloningProcess}, is equivalent to $Q$.
\end{lem}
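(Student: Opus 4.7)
The plan is to verify directly that both restriction maps in \eqref{Eqn:CloneRestrictionIsomorphisms} are isomorphisms of $\IQ$-algebras. Once this is done, $Q^+$ serves as the finite subquiver containing both $Q$ and $Q_0 = Q_\gud^+$, and the composite isomorphism provides the required identification $\End(\rho|_{Q_0}) \cong \End(\rho|_Q)$ as $\End(\rho|_{Q^+})$-algebras, which is precisely what it means for $Q$ and $Q_0$ to be equivalent.

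I would organise the argument around a single guiding principle: because $\rho(L(q))$ is a commutative diagram of isomorphisms for every bad object $q$, any endomorphism $e$ of $\rho|_{Q^+}$ must satisfy $e_{q'} = \lambda(q') \, e_q \, \lambda(q')^{-1}$ for every object $q' \in L(q)$. This both forces uniqueness of extensions and dictates the formula for constructing them. I would begin with the restriction $\End(\rho|_{Q^+}) \to \End(\rho|_Q)$. For injectivity, if $e$ vanishes on $Q$, then in particular $e_q = 0$ for each bad $q \in Q_\bad$, and conjugation by $\lambda(q')$ shows $e_{q'} = 0$ for every object $q'$ in the attached quiver $L(q)$; since every new object of $Q^+$ lies in some such $L(q)$, we conclude $e = 0$. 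For surjectivity, given $e \in \End(\rho|_Q)$ I would extend it to $Q^+$ by the formula above on each $L(q)$ and verify compatibility with the three types of morphisms of $Q^+$.

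The compatibility check is where the construction of $Q^+$ pays off, and I expect this to be the only nontrivial step. Morphisms internal to $Q$ are handled by hypothesis, and morphisms internal to a cloning quiver $L(q)$ are handled by the conjugation formula together with the commutativity of $\rho(L(q))$. The substantive verification concerns the bridging morphisms $f'$ added in steps 2 and 3 of \ref{Par:CloningProcess}. For a step-2 bridge $f' \colon p' \to q'$ with $p' \in L_\gud(p)$ and $q' \in L_\gud(q)$, the commutativity of \eqref{Eqn:DgeCloneStep2} gives the identity $\rho(f') = \lambda(q') \, \rho(f) \, \lambda(p')^{-1}$, and a short computation shows
\[
\rho(f') \, e_{p'} \;=\; \lambda(q') \, \rho(f) \, e_p \, \lambda(p')^{-1} \;=\; \lambda(q') \, e_q \, \rho(f) \, \lambda(p')^{-1} \;=\; e_{q'} \, \rho(f'),
\]
using that $e$ commutes with $\rho(f)$ in $Q$. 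The step-3 bridges are treated analogously, using \eqref{Eqn:DgeCloneStep3} and the fact that one of the two endpoints is already a good object on which $e$ is unchanged.

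For the restriction $\End(\rho|_{Q^+}) \to \End(\rho|_{Q_\gud^+})$, the same two ideas suffice but in the opposite direction: injectivity uses that for every bad $q$ the quiver $L_\gud(q)$ is non-empty, so $e_q$ is determined by conjugating any $e_{q'}$ with $q' \in L_\gud(q)$; surjectivity extends a given $e \in \End(\rho|_{Q_\gud^+})$ to bad objects by the same conjugation formula, with well-definedness following from the commutativity and isomorphism properties of $\rho(L(q))$, and compatibility with morphisms of $Q^+$ reducing, after the analogous diagram chase, to compatibility with morphisms of $Q_\gud^+$. The main obstacle throughout is purely bookkeeping: ensuring that the conjugation prescription is simultaneously forced and consistent on every bridging morphism, which is exactly what steps 2 and 3 of the cloning construction were designed to guarantee.
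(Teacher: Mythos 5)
Your proposal is correct and follows essentially the same strategy as the paper's proof: both use the conjugation formula $e_{q'} = \lambda(q')\,e_q\,\lambda(q')^{-1}$ to establish injectivity and to construct the extension for surjectivity, and both verify compatibility by splitting morphisms of $Q^+$ into the three cases (both endpoints in $Q$, neither endpoint in $Q$, exactly one endpoint in $Q$), reducing to the commutativity of diagrams~\eqref{Eqn:DgeCloneStep2} and~\eqref{Eqn:DgeCloneStep3}. Your version is somewhat more explicit than the paper's (you write out the conjugation computation and mention well-definedness of the extension to bad objects when $L_\gud(q)$ contains several objects), but this is a matter of presentation rather than a different argument.
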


\begin{proof}
We check that the restriction morphisms \eqref{Eqn:CloneRestrictionIsomorphisms} are isomorphisms. Elements of $E^+ := \End(\rho|_{Q^+})$ are collections of linear endomorphisms $(e_q)_{q\in Q^+}$ indexed by objects of $Q^+$, with $e_q\in \End(\rho(q))$, satisfying
\begin{equation}\label{Eqn:CommutatorEquationForEndRhoQ}
e_q \circ \rho(f) = \rho(f)\circ e_p
\end{equation}
for each morphism $f\colon p\to q$ in $Q^+$. Elements of $E := \End(\rho|_Q)$ and $E_0 = \End(\rho|_{Q_0})$ are described similarly. In order to prove that the restriction map $E^+ \to E$ is injective, consider an element $e = (e_q)_{q\in Q^+}$ of $E^+$ such that $e_q=0$ for all $q\in Q$, fix an object $q' \in Q^+$, and let us show that $e_{q'}$ is zero. If $q'$ is not already an object of $Q$, then $q'$ is an object of $L(q)$ for some unique $q\in Q_\bad$. By definition, the diagram of vector spaces and linear maps
\begin{equation}\label{Eqn:DeletableIsomorphisms}
\begin{diagram}
\node{\rho(q)}\arrow{s,l}{e_q}\arrow[2]{e,tb}{\lambda(q')}{\cong}\node[2]{\rho(q')}\arrow{s,r}{e_{q'}}\\
\node{\rho(q)}\arrow[2]{e,tb}{\lambda(q')}{\cong}\node[2]{\rho(q')}
\end{diagram}
\end{equation}
commutes, and since $e_q=0$ we have indeed $e_{q'}=0$. This settles injectivity of the map $E^+\to E$, and injectivity of $E^+ \to E_0$ is shown similarly. In order to prove that the restriction map $E^+ \to E$ is also surjective, fix an element $(e_q)_{q\in Q}$ of $E$. We construct a tuple $(e_q)_{q\in Q^+}$ by considering as before for each $q'\in Q^+$ which is not already in $Q$ the unique map $\lambda(q')\colon \rho(q) \to \rho(q')$ and take for $e_{q'}$ the unique endomorphism of $\rho(q')$ for which the square \eqref{Eqn:DeletableIsomorphisms} commutes. It remains to pick a morphism $f$ in $Q^+$ and check that the relation \eqref{Eqn:CommutatorEquationForEndRhoQ} holds. If the target and the source of $f$ both belong to $Q$ then $f$ is a morphism in $Q$ and \eqref{Eqn:CommutatorEquationForEndRhoQ} holds by definition. If neither target nor source of $f$ belong to $Q$, then \eqref{Eqn:CommutatorEquationForEndRhoQ} holds because the square \eqref{Eqn:DgeCloneStep2} in Step 2 is supposed to commute, and if the target of $f$ but not the source belongs to $Q$, or the other way around, then the commutativity of \eqref{Eqn:DgeCloneStep3} implies \eqref{Eqn:CommutatorEquationForEndRhoQ}. Surjectivity of the restriction morphism $E^+ \to E_0$ is shown similarly.
\end{proof}

\vspace{4mm}
\begin{lem}\label{Lem:EliminatingTypesbc}
Let $Q$ be a finite subquiver of $\Qh(k)$. There exists a quiver $Q_0$ which is equivalent to $Q$ and such that there exist integers $n_0$ and $i_0$ such that all objects in $Q_0$ are of degree $n_0$ and twist $i_0$.
\end{lem}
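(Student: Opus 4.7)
The plan is to apply the cloning procedure of \ref{Par:CloningProcess}: for every object of $Q$ we will construct a single \emph{good clone} of the right degree and twist, together with a finite zigzag realising the canonical isomorphism on $\rho$; and for every morphism of $Q$ we will build a type~(a) morphism between the corresponding clones.

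Since $Q$ is finite, we may choose integers $n_0$ and $i_0$ large enough that for every object $q = [X, Y, n, i]$ of $Q$ both $a(q) := n_0 - n - (i_0 - i)$ and $b(q) := i_0 - i$ are non-negative, and set
$$q^* \:=\: \bigl[\,\Sigma^{a(q)}(\IG_m, \{1\})^{\wedge b(q)} (X, Y),\; n_0,\; i_0\,\bigr].$$
Iterating \eqref{Eqn:DegreeShift} and \eqref{Eqn:TwistShift} then produces a canonical isomorphism $\rho(q) \cong \rho(q^*)$. For each bad $q$ we take $L(q)$ to be a chain of elementary zigzags, each one consisting either of a single morphism of type~(c) (to insert one $(\IG_m, \{1\})$-smash) or of the short roof built from the homotopy equivalence \eqref{Eqn:HReplacement} and the connecting map of the triple \eqref{Eqn:ConeSuspensionTriple} (to insert one suspension). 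Since each elementary step is an isomorphism on $\rho$, the diagram $\rho(L(q))$ will be a commutative diagram of isomorphisms, and we may set $L_\gud(q) = \{q^*\}$.

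The delicate step is lifting the morphisms of $Q$ to type~(a) morphisms between clones. A type~(a) morphism lifts by functoriality of $\Sigma$ and of $(\IG_m, \{1\}) \wedge (-)$. For a type~(c) morphism the shifts $a, b$ applied to source and target are so balanced that the underlying pairs of $p^*$ and $q^*$ coincide up to an evident permutation of smash factors, which furnishes a canonical $f'$. For a type~(b) morphism $[Y, Z, n-1, i] \to [X, Y, n, i]$, where $a(p) = a(q) + 1$ and $b(p) = b(q)$, we will invoke the Puppe connector $(X, Y) \to \Sigma(Y, Z)$ of \eqref{Eqn:PuppeConnectorRoof}: its $a(q)$-fold suspension and $b(q)$-fold $(\IG_m, \{1\})$-smash gives a morphism in $\bHo(k)$ between the underlying pairs of $q^*$ and $p^*$, whence the desired type~(a) morphism $f' \colon p^* \to q^*$ in $\Qh(k)$. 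Commutativity of the squares \eqref{Eqn:DgeCloneStep2} and \eqref{Eqn:DgeCloneStep3} will then reduce to the naturality of suspension, of Künneth, and, for type~(b), to the identity \eqref{Eqn:PuppeConnectorInCohomology}.

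Once all of this is assembled, Lemma~\ref{Lem:DeletingClonesFromQuiverRep} will exhibit $Q_0 = Q_\gud^+$ as a quiver equivalent to $Q$ and consisting entirely of objects of degree $n_0$ and twist $i_0$. The main obstacle is the bookkeeping: checking that the nested suspensions and smashes defining the various $L(q)$ and the lifts $f'$ fit together coherently so that every square required by the cloning recipe commutes, and in particular matching the Puppe connector with the suspension isomorphism after further suspensions and $(\IG_m, \{1\})$-smashes.
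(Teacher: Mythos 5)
Your proposal follows the same underlying strategy as the paper: invoke the cloning recipe of \ref{Par:CloningProcess}, raise twists by inserting $(\IG_m,\{1\})$--smashes (morphisms of type (c)), and raise degrees by inserting suspensions via the zigzag $\Sigma q \leftarrow Hq \to q$ together with the Puppe connector \eqref{Eqn:PuppeConnectorRoof} to absorb type (b) morphisms. The genuine difference is scheduling. The paper works incrementally: it first eliminates the lowest twist $i_1$ (each clone $Tq$ is a single type (c) step from $q$, and the required squares are immediate from functoriality of $T$ and the equation $p=Tq$ for type (c)), repeats until only one twist remains, and then similarly eliminates degrees one at a time (each clone $\Sigma q$ is one zigzag away, and the type (b) lifts are exactly \eqref{Eqn:PuppeConnectorInCohomology}). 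You instead build each clone $q^\ast$ at $(n_0,i_0)$ in one shot, and lift every morphism of $Q$ through the full amplification $\Sigma^{a}(\IG_m,\{1\})^{\wedge b}$. This works, but it shifts all the work onto what you call the bookkeeping: you must check that the long composites of suspension and K\"unneth isomorphisms defining the $\lambda(q')$ along the chains $L(q)$ are compatible, across objects, with the amplified morphisms of types (a), (b) and (c). Those checks do reduce to naturality of the suspension isomorphism and of the K\"unneth isomorphism for morphisms in $\bHo(k)$ (and, for type (b), to \eqref{Eqn:PuppeConnectorInCohomology} applied before amplification) — so nothing fails — but they are precisely the verifications that the paper's layer-by-layer induction renders trivial by keeping every clone one elementary step away from its original. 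In short: same ingredients, different order of operations; the incremental scheme buys you near-tautological commuting squares, the one-shot scheme buys directness at the cost of the deferred coherence checks you flag at the end.
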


\begin{proof}
\begin{par}
Recall that we refer to the integers $n$ and $i$ in an object $[X,Y,n,i]$ of $\Qh(k)$ as \emph{degree} and \emph{twist} respectively. Given integers $n$ and $i$ and a quiver $Q \subseteq \Qh(k)$, let us denote by $Q[n,i]$ the full subquiver of $Q$ consisting of objects with degree $n$ and twist $i$. Notice that $Q[n,i]$ only contains morphisms of type (a).
\end{par}

\begin{par}
Let $Q \subseteq \Qh(k)$ be a finite quiver containing objects with different twists and degrees. Following the process outlined in \ref{Par:CloningProcess}, we will show that there exists a finite quiver $Q_0$ which is equivalent to $Q$ and contains fewer different twists, and then continue inductively until there is only one twist left. We then proceed with a different construction, reducing the number of different degrees and not adding any new twists. This will eventually lead to a quiver which is equivalent to $Q$ and has only one twist and one degree. 
\end{par}

\begin{par}
Since $Q$ is finite, only finitely many of the quivers $Q[n,i]$ are non-empty. Choose $(n_0,i_0)$ large enough, such that whenever $Q[n,i]$ non-empty, then 
$(n_0,i_0) = (n+d+t,i+t)$ for non-negative integers $d$ and $t$. The quiver $Q$ can be drawn schematically as a finite diagram of the shape
$$\begin{diagram}
\node[4]{Q[n_0,i_0]}\arrow{sw}\\
\node[3]{Q[n_0-1,i_0-1]}\arrow{sw}\node{Q[n_0-1,i_0]}\arrow{n}\arrow{sw}\\
\node[2]{Q[n_0-2,i_0-2]}\arrow{sw} \node{Q[n_0-2,i_0-1]} \arrow{sw} \arrow{n} \node{Q[n_0-2,i_0]}\arrow{sw} \arrow{n}\\
\node{\cdots}\node{\cdots}\arrow{n}\node{\cdots}\arrow{n}\node{\cdots}\arrow{n}
\end{diagram}$$
where the vertical arrows symbolise (many) morphisms of type (b) and the diagonal arrows symbolise morphisms of type (c), and where all nodes are finite quivers with internal morphisms only of type (a).
\end{par}

\vspace{2mm}
\begin{par}{\bf Claim: }
\emph{Let $i_1$ be the smallest integer such that $Q[n,i_1]$ is non-empty for some $n\leq n_0$, and suppose $i_1<i_0$. There exists a finite quiver $Q_0$ which is equivalent to $Q$ and such that if $Q_0[n,i]$ is non-empty, then $i_1<i\leq i_0$ and $n\leq n_0$.}
\end{par}

\begin{par}
To prove this claim, let us denote by $Q[i_1]$ the full subquiver of $Q$ of objects with twist $i_1$ and declare these to be the bad objects. We will construct $Q^+$ as outlined in \ref{Par:CloningProcess}. As for the first step, let $Q^+$ be the quiver, subject to further enlargement, obtained from $Q$ by adding for every $q = [X,Y,n,i_1]$ of $Q[i_1]$ the quiver $L(q)$ consisting of the two objects $q$ and
$$Tq= [X\times \IG_m, (Y\times\IG_m)\cup (X \times \{1\}),n+1,i_1+1]$$
and the morphism $\kappa_q\colon Tq \to q$ of type (c). The induced linear map $\rho(\kappa_q)\colon \rho(Tq)\to \rho(q)$ is an isomorphism. Objects in $Q^+$ have twists between $i_1$ and $i_0$, and degrees at most $n_0$. The construction of $Tq\to q$ is functorial in the evident way for morphisms $f\colon  p\to q$ in $Q[i_1]$ of type (a) and (b), so that the following diagram of vector spaces, corresponding to \eqref{Eqn:DgeCloneStep2} in the abstract setting, commutes.
$$\begin{diagram}
\setlength{\dgARROWLENGTH}{6mm}
\node{\rho(p)}\arrow{e,t}{\rho(f)} \arrow{s,l}{\cong}\node{\rho(q) }\arrow{s,r}{\cong}\\
\node{\rho(Tp)} \arrow{e,t}{\rho(Tf)}\node{\rho(Tq)}
\end{diagram}$$
As for the second step in the process, add the morphisms $Tf$ to $Q^+$. For the final step, whenever there is a morphism in $Q$ between an object $q$ of $Q[i_1]$ and an object $p$ not in $Q[i_1]$, this morphism must be a morphism $p\to q$ of type (c). Thus $p$ is a copy of $Tq$, and we add the canonical isomorphism $p=Tq$ to $Q^+$. Now we can define $Q_0 \subseteq Q^+$ to be the full subquiver obtained by deleting objects in $Q[i_1]$. As we have checked in Lemma \ref{Lem:DeletingClonesFromQuiverRep}, the quivers $Q$ and $Q_0$ are equivalent, and by construction $Q_0$ contains only objects with twist $i_1< i \leq i_0$ and degrees $n\leq n_0$. This proves the claim. Arguing by induction, we can continue the proof of the lemma under the assumption that $Q$ contains only objects with twist $i_0$.
\end{par}

\vspace{2mm}
\begin{par}{\bf Claim: }
\emph{Let $n_1$ be the smallest integer such that $Q[n_1,i_0]$ is non-empty, and suppose $n_1<n_0$. There exists a finite quiver $Q_0$ which is equivalent to $Q$ and such that, if $Q_0[n,i]$ is non-empty, then $i = i_0$ and $n_1<n\leq n_0$.}
\end{par}

\begin{par}
Let us denote by $Q[n_1] := Q[n_1,i_0]$ the full subquiver of $Q$ whose objects are those of degree $n_1$, and declare these to be the bad objects to be replaced. Given an object $q = [Y,Z,n_1,i_0]$ of $Q[n_1]$ let us denote by $Hq$ and $\Sigma q$ the objects
\begin{eqnarray*}
Hq & = & [(Y\times \{0,1\}) \cup (Z\times\IA^1), (Y \times\{0\}) \cup (Z\times\IA^1),n_1,i_0]\\
\Sigma q & = & [Y\times \IA^1, (Y \times\{0,1\}) \cup (Z\times\IA^1) ,n_1+1,i_0]
\end{eqnarray*}
and let us write $\iota_q\colon Hq \to q$ for the morphism of type (a), given by the inclusion of $Y = Y \times\{1\}$ into $Y\times \{0,1\} \cup (Z\times\IA^1)$ and $\delta_q\colon Hq \to \Sigma q$ for the unique morphism of type (b). The morphisms $\rho(\iota_q)\colon \rho(Hq) \to \rho(q)$ and $\rho(\delta_q)\colon \rho(Hq) \to \rho(\Sigma q)$ are isomorphisms, and their composite is the canonical isomorphism $H^{n_1}(Y,Z)(i_0) \cong H^{n_1+1}(\Sigma(Y,Z))(i_0)$ as we explained in \ref{Par:Puppe}. Let $Q^+$ be the quiver, subject to further enlargement, obtained by adding to $Q$  the objects and morphisms 
$$L(q) \quad=\quad\left[\Sigma q \xleftarrow{\:\:\delta_q\:\:} Hq\xrightarrow{\:\:\iota_q\:\:}  q\right]$$
for $q\in Q[n_1]$. The construction of the objects $Hq$ and $\Sigma q$ and morphisms $\delta_q$ and $\iota_q$ is in the obvious way functorial for morphisms $f\colon p\to q$ in $Q[n_1]$, which are all of type (a), and the following diagram of vector spaces corresponding to \eqref{Eqn:DgeCloneStep2} in the abstract setting commutes: 
$$\begin{diagram}
\setlength{\dgARROWLENGTH}{6mm}
\node{\rho(\Sigma p)}\arrow{s,l}{\rho(\Sigma f)}\node{\rho(Hp)}\arrow{w,t}{\cong} \arrow{e,t}{\cong}\arrow{s,l}{\rho(Hf)}\node{\rho(p)}\arrow{s,l}{\rho(f)}\\
\node{\rho(\Sigma q)}\node{\rho(Hq)}\arrow{w,t}{\cong}\arrow{e,t}{\cong}\node{\rho(q)}
\end{diagram}$$
As for the second step in \ref{Par:CloningProcess}, add for every morphism $f$ in $Q[n_1]$ the morphism $\Sigma f$ to $Q^+$. For the third and final step, whenever there is a morphism in $Q$ between an object of $Q[n_1]$ and an object not in $Q[n_1]$, this morphism must be a morphism $q\to p$ of type (b), say
$$d\colon [Y,Z,n_1,i_0] \to [X,Y,n_1+1,i_0]$$
of type (b). Add then to $Q^+$ the morphism $s^{-1}f\colon \Sigma q \to p$ of type (a) as given in \eqref{Eqn:PuppeConnectorRoof}. The commutative diagram \eqref{Eqn:PuppeConnectorInCohomology} cast in different notation is the following commutative triangle: 
$$\begin{diagram}
\node{\rho(q)}\arrow{s,l}{\partial}\arrow{e,t}{\rho(d)}\node{\rho(p)}\\
\node{\rho(\Sigma q)}\arrow{ne,b}{\rho(s^{-1}f)}
\end{diagram} \qquad \qquad \partial = \rho(\delta_q)\circ \rho(\iota_q)^{-1}$$
This completes step 3 in \ref{Par:CloningProcess}, and hence proves the claim. Arguing by induction on the number of different degrees in $Q$ finishes the proof of the lemma. 
\end{par}
\end{proof}

\vspace{4mm}
\begin{lem}\label{Lem:ReductionToSingleObject}
Let $Q$ be a finite subquiver of $\Qh(k)$ and suppose that there exist integers $n_0$ and $i_0$ such that all objects in $Q$ are of degree $n_0$ and twist $i_0$. There exists a quiver $Q_0$ which is equivalent to $Q$ and consists of only one object $q_0$ and endomorphisms.
\end{lem}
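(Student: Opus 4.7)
My plan is to take $q_0 = [X, Y, n_0, i_0]$ where $(X, Y)$ is the wedge of all the pairs $(X_l, Y_l)$ appearing as the objects $q_l = [X_l, Y_l, n_0, i_0]$ of $Q$, that is,
$$(X, Y) = (X_1, Y_1) \vee \cdots \vee (X_r, Y_r) = (X_1 \sqcup \cdots \sqcup X_r,\, Y_1 \sqcup \cdots \sqcup Y_r).$$
The analytification $(X, Y)^\top$ is the wedge of the pointed spaces $(X_l, Y_l)^\top$, whence a canonical decomposition $\rho(q_0) = \bigoplus_l \rho(q_l)$. Because every object of $Q$ has the same degree and twist as $q_0$, all morphisms in $Q$ are of type (a), and only type-(a) morphisms will be needed to connect $q_0$ to the $q_l$.

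For each $l$, the inclusion of pairs $(X_l, Y_l) \hookrightarrow (X, Y)$ gives a type-(a) morphism $a_l \colon q_0 \to q_l$ in $\Qh(k)$, with $\rho(a_l)$ the projection of $\rho(q_0)$ onto its $l$-th summand. In the opposite direction, I construct $b_l \colon q_l \to q_0$ as the type-(a) morphism coming from the roof in $\bHo(k)$, where $Y_l^+ := Y_l \cup \bigsqcup_{m \neq l} X_m$:
\begin{displaymath}
\xymatrix{
 & (X, Y_l^+) & \\
 (X, Y) \ar[ur] & & (X_l, Y_l) \ar[ul]_{\simeq}
}
\end{displaymath}
The right arrow is a homotopy equivalence because both pairs analytify to $X_l(\IC)/Y_l(\IC)$ (the other components of $X$ are contained in $Y_l^+$ and hence collapse to the base point), and a direct computation from the wedge decomposition shows that $\rho(b_l)$ is the inclusion of $\rho(q_l)$ as the $l$-th summand of $\rho(q_0)$. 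I then enlarge $Q$ to $Q^+$ by adjoining $q_0$, the $a_l$'s and $b_l$'s, and every composite $\tilde f_{jk} := b_k \circ f \circ a_j \colon q_0 \to q_0$ for a morphism $f \colon q_j \to q_k$ in $Q$; in particular, $P_l := b_l \circ a_l$ realises the idempotent projection onto the $l$-th summand. Finally, $Q_0 \subseteq Q^+$ is defined to be the full subquiver on $q_0$, whose morphisms are exactly these composites.

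It remains to verify that the restriction maps $\End(\rho|_{Q^+}) \to \End(\rho|_Q)$ and $\End(\rho|_{Q^+}) \to \End(\rho|_{Q_0})$ are both isomorphisms. Writing $e_{q_0}$ as a block matrix $(E_{kj})_{k, j}$ with $E_{kj} \colon \rho(q_j) \to \rho(q_k)$, commutation of $e_{q_0}$ with $\rho(a_l)$ and $\rho(b_l)$ for every $l$ forces $E_{kj} = 0$ for $k \neq j$ and $E_{ll} = e_{q_l}$, so the whole tuple $(e_q)_{q \in Q^+}$ is determined by $(e_{q_l})_l$. Commutation with the remaining morphisms of $Q$ then amounts to the compatibility $e_{q_k} \rho(f) = \rho(f) e_{q_j}$ for $f \colon q_j \to q_k$ in $Q$, matching the definition of $\End(\rho|_Q)$. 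On the $Q_0$ side, commutation with each $P_l$ forces the same block-diagonal shape, and commutation with each $\rho(\tilde f_{jk})$ (a matrix with the single nonzero block $\rho(f)$ in position $(k, j)$) yields exactly the same compatibility on diagonal blocks. Both endomorphism algebras are therefore identified with the same set of compatible tuples, so $Q$ and $Q_0$ are equivalent. The only non-routine step is the existence of the projection morphisms $b_l$ in $\bHo(k)$, which I handle by absorbing the unwanted components of $X$ into the subvariety $Y_l^+$.
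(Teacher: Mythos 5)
Your proposal is correct and follows essentially the same strategy as the paper: take $q_0$ to be the wedge of all the $(X_l, Y_l)$, adjoin projection and inclusion morphisms together with the induced endomorphisms $b_k \circ f \circ a_j$ of $q_0$, and verify that both restriction maps on endomorphism algebras are isomorphisms. The one place you go beyond the paper's exposition is the explicit roof $(X,Y) \to (X,Y_l^+) \xleftarrow{\simeq} (X_l,Y_l)$ realising the "collapse all other wedge summands" morphism; the paper instead describes it abstractly as identity on $X_l$ and the zero morphism of $\bHo(k)$ on the other components, and these are the same morphism.
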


\begin{proof}
For notational convenience, we index objects of $Q$ by a finite set, $\mathrm{Obj}(Q) = (q_\alpha)_{\alpha \in A}$, and write $q_\alpha = [X_\alpha, Y_\alpha, n_0,i_0]$ for every $\alpha\in A$. Define $q_0=[X_0,Y_0,n_0,i_0]$ to be the object obtained from the pair of varieties
$$(X_0, Y_0) = \bigvee_{\alpha\in A} (X_\alpha, Y_\alpha)$$
and let us construct a quiver $Q^+$ by adding to $Q$ the object $q_0$ and the following morphisms:
\begin{enumerate}
    \item For each $\alpha\in A$, the morphism $q_0\to q_\alpha$ given by the inclusion $\iota_\alpha\colon X_\alpha \to X_0$.
    \item For each $\alpha\in A$, the morphism $q_\alpha \to q_0$ given by the morphism $\pi_\alpha\colon X \to X_\alpha$ which is the identity on $X_\alpha$ and the zero map on all other components. 
    \item For each morphism $h\colon q_\alpha \to q_\beta$ in $Q$ the endomorphism $q_0\to q_0$ given by the composite $\iota_\beta \circ h \circ \pi_\alpha$.
\end{enumerate}
The vector space $\rho(q_0)$ is the direct sum
$$\rho(q_0) = \bigoplus_{\alpha\in A} \rho(q_\alpha)\:,$$
the morphisms $\rho(\iota_\alpha)\colon \rho(q_0) \to \rho(q_\alpha)$ are projections and the morphisms \hbox{$\rho(\pi_\alpha)\colon \rho(q_\alpha) \to \rho(q_0)$} are the inclusions. The endomorphisms of the object $q_0$ induce, besides the identity, the linear endomorphisms
$$\rho(q_0) \xrightarrow{\:\:\mathrm{proj.}\:\:} \rho(q_\alpha) \xrightarrow{\:\:\rho(h)\:\:}\rho(q_\beta) \xrightarrow{\:\:\mathrm{incl.}\:\:} \rho(q_0)$$
for every morphism $h$ of $Q$, in particular projectors are obtained from identity morphisms $\id_{q_\alpha}$. It is clear that to give an endomorphism of $\rho(q_0)$ which commutes with all these linear endomorphisms is the same as to give an endomorphism of the representation \hbox{$\rho|_Q�\colon Q\to \bVec_\IQ$,} or more precisely, that the algebra morphisms
$$\End(\rho|_Q) \leftarrow \End(\rho|_{Q^+}) \to \End(\rho|_{Q_0})$$
are isomorphisms, which is what we wanted to show.
\end{proof}

\vspace{4mm}
\begin{par}
The conjunction of the statement of Lemma \ref{Lem:EliminatingTypesbc} and Lemma \ref{Lem:ReductionToSingleObject} yields Proposition \ref{Pro:SingleObjectQuiverFactorisation}, stating that every finite subquiver of $\Qh(k)$ is equivalent to a quiver containing only one object. We now follow the outline \ref{Par:SingleObjectQuiverFactorisation} towards the proof of our main theorem.
\end{par}

\vspace{4mm}
\begin{prop}\label{Pro:KernelIsElementaryQuotient}
Let $f_\alpha\colon (X_\alpha,Y_\alpha) \to (X,Y)$ be morphisms in $\bHo(k)$. There exists a morphism $f\colon (X,Y) \to (X_1,Y_1)$ in $\bHo(k)$ such that, for every integer $n\geq 0$, the sequence
\begin{equation}\label{Eqn:KernelIsElementaryQuotient}
H^n(X_1,Y_1)\xrightarrow{\:\: f^\ast\:\:} H^n(X,Y) \xrightarrow{\:\: (f^\ast_\alpha) \:\:}  \prod_\alpha H^n(X_\alpha, Y_\alpha)    
\end{equation}
is exact. In other words, the image of $f^\ast$ is the intersection of the kernels of the maps $f_\alpha^\ast$.
\end{prop}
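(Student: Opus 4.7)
The plan is to combine the family $(f_\alpha)$ into a single morphism $g$ via the wedge, reduce to $g$ being a closed immersion, and then take the algebraic mapping cone of $g$. The pushout map from $(X,Y)$ into this cone will induce in cohomology exactly what we want: its image will be $\ker g^\ast$, which by the splitting of cohomology over wedges equals $\bigcap_\alpha \ker f_\alpha^\ast$.

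Specifically, set $(X',Y') := \bigvee_\alpha(X_\alpha,Y_\alpha)$. This wedge is the coproduct in $\bAff_2(k)$, and since the class of homotopy equivalences is stable under wedges, it remains a coproduct in $\bHo(k)$; so the $f_\alpha$ assemble into a unique morphism $g\colon (X',Y')\to(X,Y)$ in $\bHo(k)$. Relative cohomology splits on the wedge, giving an isomorphism $H^n(X',Y') \cong \prod_\alpha H^n(X_\alpha,Y_\alpha)$ that identifies $g^\ast$ with $(f_\alpha^\ast)$. By Lemma~\ref{Lem:ImmersionHomotopyFactorisation}, factor $g = s\tilde g$ where $\tilde g$ is a closed immersion and $s$ is a homotopy equivalence; since $s^\ast$ is an isomorphism, one has $\ker g^\ast = (s^\ast)^{-1}(\ker \tilde g^\ast)$, so it suffices to solve the problem for $\tilde g$ and post-compose with $s^{-1}$ in $\bHo(k)$. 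We may therefore assume $g$ is a closed immersion of affine pairs, and form the pushout in $\bAff_2(k)$ of
\[
(X,Y) \xleftarrow{\ g\ } (X',Y') \xrightarrow{\ \iota\ } C(X',Y'),
\]
where $\iota\colon x\mapsto(x,1)$ is the canonical closed immersion into the cone. Lemma~\ref{Lem:PushoutOfClosedImmersions} yields an affine pair $(X_1,Y_1)$ together with the desired morphism $f\colon (X,Y)\to(X_1,Y_1)$; explicitly, $X_1$ is obtained by gluing $X$ and $X'\times\IA^1$ along $g(X') = X'\times\{1\}$, and $Y_1 = Y\cup (X'\times\{0\})\cup(Y'\times\IA^1)$.

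Applying $(-)^\top$ carries this algebraic pushout to a topological pushout, and since closed algebraic immersions are cofibrations on underlying spaces it is even a homotopy pushout. The cone $C(X',Y')$ is contractible, as noted in \ref{Par:Puppe}, so $(X_1,Y_1)^\top$ is homotopy equivalent to the topological mapping cone of $g^\top$. The associated Puppe cofibre sequence then gives the long exact sequence
\[
\cdots \to H^n(X_1,Y_1) \xrightarrow{\ f^\ast\ } H^n(X,Y) \xrightarrow{\ g^\ast\ } H^n(X',Y') \to\cdots,
\]
and exactness at the middle term is the required exactness of \eqref{Eqn:KernelIsElementaryQuotient}. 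The one delicate point is the identification of $(X_1,Y_1)^\top$ with the topological mapping cone of $g^\top$; it rests on the cofibration property of closed immersions and the contractibility of $C(X',Y')$, both of which are standard and already invoked elsewhere in the paper.
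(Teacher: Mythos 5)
Your proof is correct, but it takes a genuinely different route from the paper's. You form the algebraic mapping cone outright, gluing $(X,Y)$ with $C(X',Y')$ along the immersed wedge $(X',Y')$ via Lemma~\ref{Lem:PushoutOfClosedImmersions}, and then read off exactness from the resulting cofibre sequence. The paper instead never leaves the ambient variety: after factoring $f_0 \colon (X_0,Y_0) \hookrightarrow (X,Y)$ through a closed immersion, in the special case where $Y_0 = X_0\cap Y$ it simply sets $(X_1,Y_1) = (X, Y\cup X_0)$ with $f = \mathrm{id}_X$, invoking the long exact sequence of the triple $Y\subset Y\cup X_0\subset X$ together with excision $H^n(Y\cup X_0, Y)\cong H^n(X_0,Y_0)$; and when $Y_0\subsetneq X_0\cap Y$ it reduces to that special case by the graph trick, replacing $X$ with $X\times\IA^N$ and $X_0$ with the graph of a tuple of functions cutting out $Y_0$, so that the intersection condition is forced. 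Your construction is more conceptual and handles arbitrary $Y_0\subseteq X_0\cap Y$ in one stroke, at the cost of invoking the gluing lemma and the Puppe machinery; the paper's construction avoids gluing altogether (only triples and excision) but needs the graph trick. Two small remarks. First, you should say, as the paper does, that finite-dimensionality of $H^n(X,Y)$ lets one reduce to finitely many $f_\alpha$, so that $\bigvee_\alpha(X_\alpha,Y_\alpha)$ is an actual finite-type variety. Second, the ``delicate point'' you flag does check out: since $g(Y')\subseteq Y$ one computes $X\cap Y_1 = Y$ and $(X'\times\IA^1)\cap(Y_1\cup X) = (X'\times\{0,1\})\cup(Y'\times\IA^1)$ inside $X_1$, so the long exact sequence of the triple $Y_1\subset Y_1\cup X\subset X_1$ together with excision gives exactly the exact sequence you want, without having to argue at the level of topological pushouts at all.
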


\begin{proof}
Since $H^n(X,Y)$ is finite-dimensional, it suffices to consider the case where we are given finitely many morphisms $f_\alpha$. We set
$$(X_0, Y_0) = \bigvee_\alpha (X_\alpha, Y_\alpha)$$
and write $f_0\colon (X_0,Y_0)\to (X,Y)$ for the map given by $f_\alpha$ on the component $X_\alpha$. The linear map $f_0^\ast$ induced by $f_0$ in cohomology is identical to the morphism $(f_\alpha)$ in \eqref{Eqn:KernelIsElementaryQuotient}, hence instead of the finite family of morphisms $f_\alpha$ we may consider the single morphism $f_0$. By Lemma \ref{Lem:ImmersionHomotopyFactorisation}, we may assume that $f_0$ is given by a closed immersion, so we may as well pretend that the morphism $f_0\colon X_0\to X$ is the inclusion of a closed subvariety and think of $Y_0$ as a closed subvariety of $X$ as well. We retain the following diagram of inclusions: 
$$\begin{diagram}
\node{X_0}\arrow{e,tb}{\subseteq}{f_0}\node{X}\\
\node{Y_0}\arrow{e,t}{\subseteq}\arrow{n,l}{\subseteq}\node{Y}\arrow{n,l}{\subseteq}
\end{diagram}$$

\begin{par}
In the special case where $Y_0$ is the intersection of $X_0$ and $Y$, the proposition is immediate. Indeed, in that case there is a long exact sequence
$$\cdots\to H^n(X,X_0\cup Y) \to H^n(X, Y) \xrightarrow{f_0^\ast}  H^n(X_0,Y_0) \to H^{n+1}(X,X_0\cup Y)\to \cdots$$
where morphisms between cohomology groups of the same degree are induced by inclusions. The pairs $(X_1,Y_1) := (X,Y\cup X_0)$ and the morphism $f\colon (X,Y) \to (X_1, Y_1)$ given by the identity of $X$ is what we were searching for. 
\end{par}

\begin{par}
We reduce the general case to this special case. By choosing finitely many generators of the ideal of $\mathcal O_X(X)$ which defines the closed subvariety $Y_0\subseteq X,$ we obtain a function $\alpha \colon X \to \IA^N$ such that $\alpha^{-1}(0)=Y_0$. Write $\alpha_0$ for the restriction of $\alpha$ to $X_0$. We obtain the following diagram of closed immersions: 
$$\begin{diagram}
\node{\mathrm{Graph}(\alpha_0)}\arrow{e}\node{X \times \IA^N}\\
\node{Y_0\times\{0\}}\arrow{n}\arrow{e}\node{Y \times \{0\}}\arrow{n}
\end{diagram}$$
which is indeed a cartesian square: The intersection of the graph of $\alpha_0$ and $Y\times \{0\}$ is $Y_0\times\{0\}$. By the previously solved case, there exists a pair $(X_1,Y_1)$ and a morphism $h\colon X \times \IA^N \to X_1$ compatible with subvarieties, such that in the following diagram the upper row is exact: 
$$\begin{diagram}
\node{H^n(X_1,Y_1)}\arrow{se,b}{f^\ast} \arrow{e,t}{h^\ast} \node{\!H^n(X\!\!\times\!\!\IA^N,Y\!\!\times\!\!\{0\})}\arrow{e}\node{\!H^n(\mathrm{Graph}(\alpha_0),Y_0\!\!\times\!\!\{0\})}\\
\node[2]{H^n(X,Y)}\arrow{e,t}{f_0^\ast}\arrow{n,l}{\cong}\node{H^n(X_0,Y_0)}\arrow{n,l}{\cong}
\end{diagram}$$
The vertical morphisms are induced by the projections $X\times\IA^N \to X$ and $\mathrm{Graph}(\alpha_0)\to X_0$, so the square in the diagram commutes. These projections are homotopy equivalences, so the vertical maps in the diagram are indeed isomorphisms. A homotopy inverse to the projection $X_1\times\IA^N \to X_1$ is the inclusion $X_1 = X_1\times\{0\} \to X_1\times\IA^N$. Hence if we choose for $f$ the composite of $h$ with this inclusion, the whole diagram commutes, and the image of $f^\ast$ equals the kernel of $f_0^\ast$. 
\end{par}
\end{proof}

\vspace{4mm}
\begin{para}\label{Par:VectorSpaceAndMotiveProduct}
\begin{par}
Before we continue with the proof of the main theorem, let us clarify the meaning of tensor products between motives and vector spaces. First of all, given a motive $M$, that is, a vector space with a continuous action of the proalgebra $\End(\rho_{\mathrm h})$, and a vector space~$V$, we can let $E$ act on the second factor in $V \otimes M$. That means
$$e(v\otimes m)= v\otimes em$$
for $e\in E$, $v\in V,$ and $m\in M$. This way we can see $V \otimes M$ as a motive. The proalgebra $E = \End(\rho)$ has the additional structure of a Hopf algebra, hence in particular comes equipped with a comultiplication $E \to E \otimes E$ and a counit $E\to \IQ$, and we can use these structures to define an $E$-module structure on $V$ and on $V \otimes M$. These two $E$-module structures on $V \otimes M$ coincide, because the composition 
$$E \xrightarrow{\:\:\mathrm{comult.}\:\:} E \otimes E \xrightarrow{\:\:\mathrm{aug.}\otimes \id_E\:\:} \IQ \otimes E = E$$
is the identity morphism.
\end{par}

\begin{par}
Let $(X,Y)$ be a pair of algebraic varieties and fix an integer $n\geq 0$. We consider the vector spaces
$$M = H^n(X,Y)(i) \qquad V = \Hom_\IQ(M, \IQ)$$
so we can identify the vector space $V \otimes M$ with the vector space of $\IQ$-linear endomorphisms of $M= H^n(X,Y)(i)$. By its definition, the proalgebra $E$ acts continuously on $M$, this is indeed how we see $M$ as a motive in the first place, hence we obtain an $E$-module structure on $V\otimes M\cong \End_\IQ(M)$ by letting $E$ act on the second factor. On the other hand, the $E$-action on $M$ \emph{is} an algebra morphism $E \to \End_\IQ(M)$, and we can see $\End_\IQ(M)$ also this way as an $E$-module via left multiplication. Again, these two $E$-module structures on $V \otimes M$ coincide by basic algebra.
\end{par}
\begin{par}
Let $f$ be endomorphisms of $(X,Y)$ in $\bHo(k)$. The endomorphism $f$ induces a linear emdomorphism of $M$ which we denote by $f^\ast \colon M\to M$. The linear dual of $f^\ast$ is an endomorphism of $V$, which we denote by $f_\ast\colon V\to V$. Let  
\begin{equation}\label{Eqn:CommutatorAlgebraInclusion}
E(f) \subseteq \End_\IQ(M) = V\otimes M    
\end{equation}
be the commutator of $f^\ast$. As a subspace of $V\otimes M$, the commutator of $f^\ast$ is the subspace
$$E(f) = \Big\{\sum_iv_i \otimes m_i \:\Big|\:\: \sum_iv_i\otimes f^\ast(m_i) = \sum_if_\ast(v_i) \otimes m_i\Big\}$$
that is, the equaliser of the maps $\id_V\otimes f^\ast$ and $f_\ast\otimes \id_M$. By definition, the action of $E$ on $M$ factors over $E(f)$. 
\end{par}
\begin{par}
Finally, let us add to this discussion that we can use the antipode map $E\to E$ to define an $E$-module structure on $V$, and then the comultiplication to define an $E$-module structure on $V \otimes M$. This structure is very different from the one we introduced before, and we will not use it here.
\end{par}
\end{para}

\vspace{4mm}
\begin{prop}\label{Pro:CommutatorsAreElementary}
Let $(X,Y)$ be a pair of affine algebraic varieties, and let $n\geq 0$ be an integer. Denote by $M$ the motive $H^n(X,Y)$, and by $V$ the vector space 
$$V = H_n(X(\IC), Y(\IC); \IQ)$$
so $V$ is the linear dual of the vector space $H^n(X,Y)$. There exists an algebraic cogroup $(X_+, Y_+)$, and an isomorphism of motives
$$H^{n+1}(X_+,Y_+) \xrightarrow{\:\:\beta\:\:} V \otimes M$$
such that, for every endomorphism $f$ of $(X,Y)$ in $\bHo(k)$, there is an endomorphism $f_+$ of $(X_+, Y_+)$  in $\bHo(k)$ making the following diagram of motives commute: 
$$\begin{diagram}
\node{H^{n+1}(X_+,Y_+)}\arrow{s,l}{\beta}\arrow[2]{e,t}{f_+^\ast}\node[3]{H^{n+1}(X_+,Y_+)}\arrow{s,l}{\beta}\\
\node{V \otimes M}\arrow[3]{e,t}{f_\ast \otimes \id_M - \id_V\otimes f^\ast}\node[3]{V \otimes M}
\end{diagram}$$
\end{prop}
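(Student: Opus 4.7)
The plan is to realise $(X_+,Y_+)$ as a smash product of $(X,Y)$ with a zero-dimensional pair whose cohomology supplies the factor $V$, then to suspend in order to acquire a cogroup structure and apply Example~\ref{Exa:WedgeOfCirclesCogroup} to produce the endomorphisms $f_+$.

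First I would fix a free $\IZ$-lattice $\Lambda \subseteq V$ of full rank $d$, say the image of $H_n(X(\IC),Y(\IC);\IZ)$ modulo torsion, together with a basis. Let $X_0$ be the disjoint union of $d+1$ copies of $\spec(k)$ and let $Y_0$ be a single point of $X_0$ playing the role of basepoint. The pair $(X_0,Y_0)$ is affine, and there is a canonical isomorphism $H^0(X_0,Y_0) \cong \Lambda\otimes\IQ = V$. I would then set
\[
(X_+,Y_+) \;=\; (X_0,Y_0) \wedge \Sigma(X,Y) \;=\; \Sigma\bigl((X_0,Y_0)\wedge(X,Y)\bigr),
\]
which, being a suspension, is an algebraic cogroup by the proposition preceding Example~\ref{Exa:WedgeOfCirclesCogroup}. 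Combining the suspension isomorphism of \ref{Par:Puppe} with the K\"unneth decomposition for smash products of pairs, together with the vanishing of $H^{>0}(X_0,Y_0)$, produces the required isomorphism
\[
\beta\colon H^{n+1}(X_+,Y_+) \;\xrightarrow{\;\cong\;}\; H^0(X_0,Y_0) \otimes H^n(X,Y) \;=\; V \otimes M.
\]

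Next, given an endomorphism $f$ of $(X,Y)$ in $\bHo(k)$, I would argue that $f_*\colon V \to V$ stabilises $\Lambda$. Indeed, any morphism in $\bHo(k)$ is represented by a roof of morphisms of pairs of varieties, so its topological realisation is an honest endomorphism of the pointed CW-complex $(X(\IC),Y(\IC))$, hence acts $\IZ$-linearly on integral homology modulo torsion. The matrix of $f_*$ in the chosen basis of $\Lambda$ thus has integer entries, and Example~\ref{Exa:WedgeOfCirclesCogroup} supplies an endomorphism $g_f\colon \Sigma(X_0,Y_0) \to \Sigma(X_0,Y_0)$ in $\bHo(k)$ whose induced linear map on $H^1(\Sigma(X_0,Y_0)) \cong V$ agrees with $f_*$.

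Finally, using the cogroup structure on $(X_+,Y_+)$, I would define
\[
f_+ \;=\; \bigl(g_f \wedge \id_{(X,Y)}\bigr) \;-\; \bigl(\id_{\Sigma(X_0,Y_0)} \wedge f\bigr),
\]
where the difference is taken in the group $\Hom_{\bHo(k)}((X_+,Y_+),(X_+,Y_+))$. The compatibility of cogroup addition with cohomology (equation \eqref{Eqn:CogroupAndCohomologyOperation}, valid verbatim for algebraic cogroups) and the naturality of the K\"unneth isomorphism in each factor then give
\[
\beta \circ f_+^{\,*} \;=\; \bigl(f_* \otimes \id_M \,-\, \id_V \otimes f^*\bigr)\circ\beta,
\]
as required. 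The principal obstacle is securing the integrality of $f_*$, which is what allows Example~\ref{Exa:WedgeOfCirclesCogroup} to be applied; once this is in hand, the rest of the argument reduces to naturality checks and the already established fact that suspensions are cogroups.
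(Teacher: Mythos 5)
Your proposal is correct and follows the paper's own argument essentially line for line: the same choice of $(X_0,Y_0)$ as a finite set of points, the same reduction to Example~\ref{Exa:WedgeOfCirclesCogroup} via the integrality of $f_*$ on the lattice, the same smash-product construction $(X_+,Y_+)\cong\Sigma(X_0,Y_0)\wedge(X,Y)$, and the same definition of $f_+$ as a cogroup difference. The only cosmetic difference is that you write $(X_0,Y_0)\wedge\Sigma(X,Y)$ while the paper writes $\Sigma(X_0,Y_0)\wedge(X,Y)$, but you then treat $f_+$ on the latter pair anyway, so the two are being silently identified in both accounts.
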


\begin{proof}
The homology with integral coefficients of the pair $(X(\IC), Y(\IC))$ modulo torsion is a lattice $\Lambda$ in $V$. Any morphism $f\colon (X,Y) \to (X,Y)$ induces a linear map $f_\ast \colon V\to V$ respecting this lattice. As in Example \ref{Exa:WedgeOfCirclesCogroup}, let $X_0$ be a disjoint union of points $x_0, \ldots, x_d$ and set $Y_0 = \{x_0\}$. There exists then an isomorphism of vector spaces $\alpha \colon H^1(\Sigma(X_0,Y_0)) \to V$ and a morphism $f_0\colon \Sigma(X_0,Y_0)\to \Sigma(X_0,Y_0)$ such that the diagram of vector spaces or motives
$$\begin{diagram}
\node{H^1(\Sigma(X_0,Y_0))}\arrow{s,l}{\alpha}\arrow{e,t}{f_0^\ast}\node{H^1(\Sigma(X_0,Y_0))}\arrow{s,l}{\alpha}\\
\node{V}\arrow{e,t}{f_\ast}\node{V}
\end{diagram}$$
commutes. Define
$$(X_+,Y_+) := \Sigma(X_0,Y_0) \wedge (X,Y)$$
and observe that since the cohomology of $\Sigma(X_0,Y_0)$ is concentrated in degree $1$, there is an isomorphism 
$$\beta: H^{n+1}(X_+,Y_+) \xrightarrow{\quad}H^1(\Sigma(X_0,Y_0)) \otimes H^n(X,Y) \xrightarrow{\:\:\alpha\otimes\id\:\:} V \otimes M$$
given by the K\"unneth formula. It is an isomorphism of motives as we have explained in~\ref{Par:VectorSpaceAndMotiveProduct}. Combining the constructions of $f_0$ and $\beta$ yields a commutative square of motives
\begin{equation}\label{Eqn:PlusConstructionDiagram1}
\begin{diagram}
\node{\hspace{-18mm}H^{n+1}((X_0,Y_0)\wedge (X,Y))}\arrow{s,l}{\beta} \arrow[3]{e,t}{(f_0\wedge \id)^\ast-(\id\wedge f)^\ast}\node[3]{H^{n+1}((X_0,Y_0)\wedge (X,Y))\hspace{-18mm}}\arrow{s,l}{\beta}\\
\node{V\otimes M}\arrow[3]{e,t}{f_\ast \otimes \id_M - \id_V\otimes f^\ast}\node[3]{V\otimes M}
\end{diagram}
\end{equation}
It remains to observe that $(X_+,Y_+)$ is the suspension of $(X_0,Y_0) \wedge(X,Y)$, hence already has the structure of a cogroup. Setting $f_+ = (f_0\wedge \id)  - (\id\wedge f)$ yields the sought commutative diagram in the statement of the proposition.
\end{proof}

\vspace{4mm}
\begin{thm}\label{Thm:SubquotientProblemMainQuotients}
Let $M$ be a motive over $k$. There exists a pairs of varieties $(X_1, Y_1)$ and integers $n_1$ and $i_1$, and a surjective morphism of motives
\begin{equation}\label{Eqn:SubquotientProblemMainQuotients}
H^{n_1}(X_1,Y_1)(i_1) \xrightarrow{\qquad} M.    
\end{equation}
\end{thm}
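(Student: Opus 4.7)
The plan is to follow the strategy spelled out in \ref{Par:SingleObjectQuiverFactorisation}. By continuity of the $\End(\rho)$-action on $M$, there exists a finite subquiver $Q$ of $\Qh(k)$ such that the structural morphism $\End(\rho) \to \End_\IQ(M)$ factors through $E := \End(\rho|_Q)$. Invoking Proposition \ref{Pro:SingleObjectQuiverFactorisation}, I may assume that $Q$ consists of a single object $q = [X,Y,n,i]$ together with finitely many endomorphisms $f_1,\ldots,f_r$ of $q$ in $\bHo(k)$. As an $E$-module, $M$ is a quotient of $E^s$ for some $s \geq 0$, and a direct sum of motives of the form $H^{n_0}(X_0, Y_0)(i_0)$ is again of this form by \ref{Par:Sum101IsElementary}; it therefore suffices to construct a surjection of motives from such an object onto $E$ itself.

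Set $M_0 = H^n(X,Y)(i)$ and $V = H_n(X(\IC), Y(\IC); \IQ)$. Under the motivic identification $V\otimes M_0 \cong \End_\IQ(M_0)$ reviewed in \ref{Par:VectorSpaceAndMotiveProduct}, $E$ coincides with the common commutator of the operators $f_1^\ast,\ldots,f_r^\ast$, so that
\[
E \;=\; \bigcap_{j=1}^{r} \ker\bigl(\id_V \otimes f_j^\ast \;-\; (f_j)_\ast \otimes \id_{M_0}\bigr)
\]
as a submotive of $V \otimes M_0$. Applying Proposition \ref{Pro:CommutatorsAreElementary} to the pair $(X,Y)$ and the integer $n$, and twisting by $\IQ(i)$, I obtain a single algebraic cogroup $(X_+,Y_+)$, an isomorphism of motives $\beta\colon H^{n+1}(X_+,Y_+)(i) \xrightarrow{\cong} V\otimes M_0$, and for each $j$ an endomorphism $f_{j,+}$ of $(X_+,Y_+)$ in $\bHo(k)$ such that $\beta \circ f_{j,+}^\ast = \bigl((f_j)_\ast \otimes \id_{M_0} - \id_V \otimes f_j^\ast\bigr) \circ \beta$. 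Consequently $\beta$ identifies $E$ with $\bigcap_{j}\ker f_{j,+}^\ast$ inside $H^{n+1}(X_+,Y_+)(i)$.

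To conclude, I feed the endomorphisms $f_{1,+},\ldots,f_{r,+}$ of $(X_+,Y_+)$ into Proposition \ref{Pro:KernelIsElementaryQuotient}, which produces a morphism $g\colon (X_+,Y_+) \to (X_2,Y_2)$ in $\bHo(k)$ whose pullback on $H^{n+1}$ has image exactly $\bigcap_j \ker f_{j,+}^\ast$. After the twist by $\IQ(i)$, the induced morphism of motives $H^{n+1}(X_2,Y_2)(i) \to H^{n+1}(X_+,Y_+)(i)$ surjects onto $E$. Replacing $(X_2,Y_2)$ by the wedge of $s$ copies of itself produces a surjection of motives onto $E^s$, and composing with $E^s \twoheadrightarrow M$ yields the desired surjection $H^{n_1}(X_1,Y_1)(i_1) \twoheadrightarrow M$ with $n_1 = n+1$ and $i_1 = i$. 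The argument is essentially an assembly of the machinery already developed in the paper; the one genuinely substantive point, and the entire \emph{raison d'être} of the cogroup formalism, is the passage from the algebraic commutator description of $E$ to an intersection of kernels of algebraic maps in singular cohomology, which is precisely what Proposition \ref{Pro:CommutatorsAreElementary} furnishes.
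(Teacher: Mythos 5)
Your proof is correct and follows the paper's argument in lockstep: reduce to $M = E = \End(\rho_{\mathrm h}|_Q)$ for a one-object quiver $Q$ via Proposition \ref{Pro:SingleObjectQuiverFactorisation}, identify $E$ inside $V \otimes M_0$ as a common commutator, turn each commutator equation into the kernel of $f_{j,+}^\ast$ on $H^{n+1}(X_+,Y_+)(i)$ via the cogroup $(X_+,Y_+)$ from Proposition \ref{Pro:CommutatorsAreElementary}, and then realise the intersection of kernels as an image via Proposition \ref{Pro:KernelIsElementaryQuotient}. The only cosmetic difference is that you spell out the final step (wedging $s$ copies of $(X_2,Y_2)$ to cover $E^s$) which the paper compresses into ``it suffices to prove the theorem for $M = E$''; this is exactly the intended argument.
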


\begin{proof}
According to Proposition \ref{Pro:SingleObjectQuiverFactorisation}, there exists a subquiver $Q$ of $\Qh(k)$ containing only one object $q = [X,Y,n,i]$ and possibly many morphisms $f\colon q\to q$, all given by morphisms $(X,Y) \to (X,Y)$ in $\bHo(k)$, and a factorisation
$$\End(\rho_{\mathrm h}) \xrightarrow{\:\:e\mapsto e_q\:\:}\End(\rho_{\mathrm h}|_Q) \to \End_\IQ(M)$$
of the structural map $\End(\rho_\mathrm{h}) \to \End_\IQ(M)$. Set $E = \End(\rho_{\mathrm h}|_Q)$. Since $M$ is a finite-dimensional as a vector space, there is a surjective $E$-linear map
$$E^d \to M$$
for some $d\geq0$. This $E$-linear map is also a morphism of $\End(\rho_{\mathrm h})$-modules, that is, of motives. Therefore, it suffices to prove the theorem in the case $M=E$. Let us now change notations and as in the setup of Proposition \ref{Pro:CommutatorsAreElementary}, write $M$ for the motive $H^n(X,Y)$ and $V$ for the vector space $H_n(X,Y)$, so that there is a canonical isomorphism $\End_\IQ(M) = V\otimes M$ of motives. According to Proposition \ref{Pro:CommutatorsAreElementary}, there exists a pair $(X_+, Y_+)$ and an isomorphism of motives $H^{n+1}(X_+,Y_+) \xrightarrow{\:\:\beta\:\:} V \otimes M$ such that for every morphism $f\colon (X,Y) \to (X,Y)$ in $Q$, there is a morphism $f_+:(X_+, Y_+)$  in $\bHo(k)$ such that the diagram
$$\begin{diagram}
\node{H^{n+1}(X_+,Y_+)(i)}\arrow{s,l}{\beta}\arrow[2]{e,t}{f_+^\ast}\node[3]{H^{n+1}(X_+,Y_+)(i)}\arrow{s,l}{\beta}\\
\node{V \otimes M(i)}\arrow[3]{e,t}{f_\ast \otimes \id_{M(i)} - \id_V\otimes f^\ast}\node[3]{V \otimes M(i)}
\end{diagram}$$
of motives commutes. The kernel of $f_\ast \otimes \id_M - \id_V\otimes f^\ast$ is the subalgebra of $\End_\IQ(M)$ consisting of those endomorphisms of $M$ which commute with the endomorphism $f^\ast$. Hence, via the isomorphism $\beta$, the motive $E$ is isomorphic to the intersection
$$E = \bigcap_f \ker(f_+^\ast)$$
running over all morphisms $f:q\to q$ in $Q$. By Proposition \ref{Pro:KernelIsElementaryQuotient}, this intersection is the image of a morphism of motives $H^{n+2}(X_1,Y_1)(i)\to H^{n+2}(X_+,Y_+)(i)$, which we may even choose to be induced by some morphism $(X_+,Y_+) \to (X_1,Y_1)$, hence $E$ is isomorphic to a quotient of $H^{n+1}(X_1,Y_1)(i)$.
\end{proof}

\vspace{4mm}
\begin{para}
Theorem \ref{Thm:SubquotientProblemMainQuotients} is one of the two claims in the main theorem stated in the introduction. The other claim is similar, except that we demand an injection in the opposite direction instead of the surjection \eqref{Eqn:SubquotientProblemMainQuotients}. It is conceivable that one could rewrite the proof of Theorem \ref{Thm:SubquotientProblemMainQuotients} in a dual setup, where instead of cogroups and suspensions one works with groups and loop spaces. There is no need for that, because the category of motives is tannakian, hence in particular there is a good notion of duals, and the dual of a motive of the elementary form $M= H^n(X,Y)(i)$ is again of this form. 
\end{para}

\vspace{4mm}
\begin{thm}\label{Thm:SubquotientProblemMainSubs}
Let $M$ be a motive over $k$. There exists a pair of varieties $(X_0, Y_0)$ and integers $n_0$ and $i_0$, and an injective morphism of motives
\begin{equation}\label{Eqn:SubquotientProblemMainSubs}
M \xrightarrow{\quad}H^{n_0}(X_0,Y_0)(i_0).   
\end{equation}
\end{thm}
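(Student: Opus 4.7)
The plan is to deduce the statement from Theorem \ref{Thm:SubquotientProblemMainQuotients} by tannakian duality, exactly as foreshadowed in the paragraph preceding the statement. The category $\bM(k)$ is neutral tannakian, so every motive $N$ admits a strong dual $N^\vee$; dualisation is an exact contravariant functor, it exchanges injections and surjections, and there is a natural isomorphism $N \cong N^{\vee\vee}$.

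Given a motive $M$, I would apply Theorem \ref{Thm:SubquotientProblemMainQuotients} to $M^\vee$, obtaining a surjective morphism of motives
$$H^{n_1}(X_1, Y_1)(i_1) \xrightarrow{\quad} M^\vee.$$
Dualising this surjection and invoking $M \cong M^{\vee\vee}$ produces an injective morphism
$$M \xrightarrow{\quad} \big(H^{n_1}(X_1, Y_1)(i_1)\big)^\vee.$$
The statement then follows once one identifies the dual of an elementary motive with another elementary motive.

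This last identification is where I expect the main obstacle. It is the assertion cited in the preceding paragraph, that the dual of a motive of the form $H^n(X, Y)(i)$ is again of this form. To justify it inside the Nori category I would proceed via Poincaré--Lefschetz duality. In the case where $X$ is smooth projective of pure dimension $d$ and $Y \subseteq X$ is a strict normal crossings divisor, the duality yields a canonical isomorphism of motives
$$\big(H^{n}(X, Y)(i)\big)^\vee \cong H^{2d-n}_c(X \smallsetminus Y)(d - i),$$
and the right-hand side becomes elementary after choosing a smooth compactification $(\overline{U}, \partial\overline{U})$ of $U = X \smallsetminus Y$ with $\partial\overline{U}$ a normal crossings divisor, giving $H^{2d-n}_c(U) \cong H^{2d-n}(\overline{U}, \partial\overline{U})$. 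The general case of an arbitrary pair $(X_1, Y_1)$ I would reduce to this smooth projective situation through resolution of singularities together with a Nori-basic-lemma or hypercovering argument, the crucial point being that all the identifications thereby produced agree on the level of Betti realisations and are therefore morphisms of $\End(\rho)$-modules, i.e. of motives.
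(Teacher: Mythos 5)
Your overall strategy is exactly the paper's: apply Theorem \ref{Thm:SubquotientProblemMainQuotients} to $M^\vee$, dualise the resulting surjection using rigidity of the tannakian category, and then invoke that the dual of an elementary motive is again elementary via Poincar\'e--Lefschetz duality after putting the pair into a good geometric position by resolution of singularities. So the skeleton is right, and this is the same proof the paper gives.

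Two points deserve a correction. First, your displayed duality formula is off by a ``$c$'': for $X$ smooth projective of dimension $d$ with $Y$ a normal crossings divisor, one has $H^n(X,Y)\cong H^n_c(X\smallsetminus Y)$ (since $X(\IC)/Y(\IC)$ is the one-point compactification of $(X\smallsetminus Y)(\IC)$), and Poincar\'e duality on the smooth open variety $U=X\smallsetminus Y$ then gives
\begin{equation}
\bigl(H^n(X,Y)(i)\bigr)^\vee \;\cong\; \bigl(H^n_c(U)(i)\bigr)^\vee \;\cong\; H^{2d-n}(U)(d-i),
\end{equation}
without a compact support on the right-hand side. As you wrote it, the formula would force $H^n(X,Y)$ to be self-dual up to twist, which already fails for $(X,Y)=(\IP^1,\{0\})$ in degree $2$. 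The right-hand side $H^{2d-n}(U)(d-i)=H^{2d-n}(U,\emptyset)(d-i)$ is already elementary, so the detour through a compactification of $U$ is unnecessary once the formula is corrected.

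Second, your treatment of the general pair $(X_1,Y_1)$ via ``a Nori-basic-lemma or hypercovering argument'' is left rather vague. The paper avoids needing two separate duality statements (one for the special shape, one for the reduction) by choosing the resolution symmetrically from the start: take $\overline{X}$ smooth projective of dimension $d$ carrying a strict normal crossings divisor $\overline{Y}_0+\overline{Y}_1$ with $X_1=\overline{X}\smallsetminus\overline{Y}_0$ and $Y_1=\overline{Y}_1\smallsetminus(\overline{Y}_0\cap\overline{Y}_1)$, and let duality \emph{swap the two divisors}, producing $X_0=\overline{X}\smallsetminus\overline{Y}_1$ and $Y_0=\overline{Y}_0\smallsetminus(\overline{Y}_0\cap\overline{Y}_1)$ with $\bigl(H^{n_1}(X_1,Y_1)(i_1)\bigr)^\vee = H^{2d-n_1}(X_0,Y_0)(d-i_1)$. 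Your case, with $X$ projective, corresponds to $\overline{Y}_0=\emptyset$ and is a special instance of this; spelling out the symmetric two-divisor version makes the reduction transparent and avoids appealing to a separate basic-lemma argument.
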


\begin{proof}
Let $M^\vee$ be the motive dual to $M$. By Theorem \ref{Thm:SubquotientProblemMainQuotients} there exists a surjective morphism of motives
\begin{equation}\label{Eqn:DualMotiveAsQuotient}
H^{n_1}(X_1,Y_1)(i_1) \to M^\vee    
\end{equation}
for some pair of varieties $(X_1,Y_1)$. Using resolution of singularities, we may suppose that $X_1$ is smooth with a smooth compactification $\overline X$ of dimension $d$, and a strict normal crossings divisor $\overline Y_0+\overline Y_1$ on $\overline X$ such that $X_1 = \overline X \setminus \overline Y_0$ and $Y_1 = \overline Y_1 \setminus (\overline Y_0\cap \overline Y_1)$ holds. Set $X_0 = \overline X \setminus \overline Y_1$ and $Y_0 = \overline Y_0 \setminus (\overline Y_0\cap \overline Y_1)$. The dual of the motive $H^{n_1}(X_1,Y_1)(i_1)$ is given by
$$ H^{n_1}(X_1,Y_1)(i_1)^\vee = H^{2d-n_1}(X_0,Y_0)(d-i_1)$$
hence the map dual to the surjection \eqref{Eqn:DualMotiveAsQuotient} is an injection as we sought it.
\end{proof}

\vspace{4mm}
\begin{para}
\begin{par}
We finish with a remark about \emph{effectivity}. Let $\Qeff(k) \subseteq \Q(k)$ be the full subquiver consisting of objects of the form $[X,Y,n,0]$, and write $\rho_\eff$ for the restriction of $\rho \colon \Q(k) \to \bVec_\IQ$ to this subquiver. The category of \emph{effective} motives $\bM_\eff(k)$ is the category of continuous $\End(\rho_\eff)$-modules. From the restriction morphism $\End(\rho) \to \End(\rho_\eff)$ we obtain a canonical functor
$$\bM_\eff(k) \to \bM(k)$$
which is exact and faithful. Nori originally defines $\bM(k)$ as the category obtained from $\bM_\eff(k)$ by $\otimes$-inverting the Lefschetz object $\IQ(-1) = H^1(\IG_m, \{1\})$, as explained in \cite[9.1.3]{HMS17}. There are several questions about this canonical functor, in particular one does not know at present whether it is full or not, nor does one know if its essential image is stable under extensions. The proof of Theorem \ref{Thm:SubquotientProblemMainQuotients} works verbatim for effective motives, with some shortcuts in the proof of Lemma \ref{Lem:EliminatingTypesbc}. We cannot deduce the statement of Theorem \ref{Thm:SubquotientProblemMainSubs} for effective motives in the same way, because $\bM_\eff(k)$ is not rigid - the dual of an effective motive is rarely effective. One might of course try to dualise the whole paper, that is, introduce algebraic $H$-groups and work with motives as comodules for some coalgebra. For this to succeed, one first needs to come up with a reasonable notion of loop spaces modeled by algebraic varieties.
\end{par}

\begin{par}
One can of course wonder whether for an arbitrary quiver representation $\rho\colon Q\to \bVec$ every object of the category of finite dimensional $\End(\rho)$-modules is isomorphic to a subobject or to a quotient of a finite sum of modules of the form $\rho(q)$ for $q\in Q$. In general, this is not the case, even in the particular situation where the category of $\End(\rho)$-modules acquires from somewhere a tannakian structure and a notion of weights. Examples can be fabricated as follows: let $V$ be a faithful representation of a linear algebraic group $G$, and let $Q$ be the quiver whose objects are finite sums of representations of the form $V^{\otimes a} \otimes (V^\vee)^{\otimes b}$ and whose morphisms are $G$-equivariant linear maps. We regard the forgetful functor $\rho\colon Q \to \bVec$ as a quiver representation. One can show that the category of $\End(\rho)$-modules is equivalent to the category of $G$-representations, but for a general (non-reductive) group $G$ not every representation can be injected into a sum of representations of the form $V^{\otimes a} \otimes (V^\vee)^{\otimes b}$. 
\end{par}
\end{para}

\bibliographystyle{amsalpha}

\providecommand{\bysame}{\leavevmode\hbox to3em{\hrulefill}\thinspace}
\providecommand{\MR}{\relax\ifhmode\unskip\space\fi MR }
% \MRhref is called by the amsart/book/proc definition of \MR.
\providecommand{\MRhref}[2]{%
  \href{http://www.ams.org/mathscinet-getitem?mr=#1}{#2}
}
\providecommand{\href}[2]{#2}

\end{document}